\DeclareMathOperator{\tr}{tr}
\DeclareMathOperator{\Vol}{Vol}
\DeclareMathOperator{\dvol}{dvol}
\DeclareMathOperator{\Ric}{Ric}
\DeclareMathOperator{\Conf}{Conf}
\DeclareMathOperator{\spansymbol}{span}
\newcommand{\cd}{\widetilde{d}}
\newcommand{\cg}{\widetilde{g}}
\newcommand{\cf}{\widetilde{f}}
\newcommand{\cu}{\widetilde{u}}
\newcommand{\cv}{\widetilde{v}}
\newcommand{\cw}{\widetilde{w}}
\newcommand{\cx}{\widetilde{x}}
\newcommand{\cB}{\widetilde{B}}
\newcommand{\cD}{\widetilde{D}}
\newcommand{\cI}{\widetilde{I}}
\newcommand{\cL}{\widetilde{L}}
\newcommand{\cX}{\widetilde{X}}
\newcommand{\ctau}{\widetilde{\tau}}
\newcommand{\cnabla}{\widetilde{\nabla}}
\newcommand{\cdelta}{\widetilde{\delta}}
\newcommand{\cDelta}{\widetilde{\Delta}}
\newcommand{\cmE}{\widetilde{\mathcal{E}}}
\newcommand{\cmL}{\widetilde{\mathcal{L}}}
\newcommand{\cmS}{\widetilde{\mathcal{S}}}
\newcommand{\lp}{\langle}
\newcommand{\rp}{\rangle}
\newcommand{\lv}{\lvert}
\newcommand{\rv}{\rvert}
\newcommand{\lV}{\lVert}
\newcommand{\rV}{\rVert}
\newcommand{\mE}{\mathcal{E}}
\newcommand{\mL}{\mathcal{L}}
\newcommand{\mN}{\mathcal{N}}
\newcommand{\mS}{\mathcal{S}}
\newcommand{\mV}{\mathcal{V}}
\newcommand{\bN}{\mathbb{N}}
\newcommand{\bR}{\mathbb{R}}
\def\sideremark#1{\ifvmode\leavevmode\fi\vadjust{\vbox to0pt{\vss
 \hbox to 0pt{\hskip\hsize\hskip1em
 \vbox{\hsize3cm\tiny\raggedright\pretolerance10000
 \noindent #1\hfill}\hss}\vbox to8pt{\vfil}\vss}}}
\newcommand{\suchthat}{\mathrel{}\middle|\mathrel{}}
\newcommand{\comment}[1]{}
\newtheorem{thm}{Theorem}[section]
\newtheorem{prop}[thm]{Proposition}
\newtheorem{lem}[thm]{Lemma}
\newtheorem{cor}[thm]{Corollary}
\theoremstyle{definition}
\theoremstyle{remark}
\newtheorem{remark}[thm]{Remark}
\numberwithin{equation}{section}
\begin{document}

\title{The Frank--Lieb approach to sharp Sobolev inequalities}
\author{Jeffrey S. Case}
\thanks{JSC was supported by a grant from the Simons Foundation (Grant No.\ 524601)}
\address{109 McAllister Building \\ Penn State University \\ University Park, PA 16802 \\ USA}
\email{jscase@psu.edu}
\keywords{sharp constants, Sobolev inequality}
\subjclass[2010]{Primary 53A30; Secondary 46E35}
\begin{abstract}
 Frank and Lieb gave a new, rearrangement-free, proof of the sharp Hardy--Littlewood--Sobolev inequalities by exploiting their conformal covariance.  Using this they gave new proofs of sharp Sobolev inequalities for the embeddings $W^{k,2}(\bR^n)\hookrightarrow L^{\frac{2n}{n-2k}}(\bR^n)$.  We show that their argument gives a direct proof of the latter inequalities without passing through Hardy--Littlewood--Sobolev inequalities, and, moreover, a new proof of a sharp fully nonlinear Sobolev inequality involving the $\sigma_2$-curvature.  Our argument relies on nice commutator identities deduced using the Fefferman--Graham ambient metric.
\end{abstract}
\maketitle

\section{Introduction}
\label{sec:intro}

The higher-order sharp Sobolev inequality states that on the round $n$-sphere, for any positive integer $k<\frac{n}{2}$ and any $u\in C^\infty(S^n)$, it holds that
\begin{equation}
 \label{eqn:sharp_sobolev}
 \mE_{2k}(u) := \int_{S^n} u\,L_{2k}u \geq \frac{\Gamma\bigl(\frac{n+2k}{2}\bigr)}{\Gamma\bigl(\frac{n-2k}{2}\bigr)}\omega_n^{2k/n}\left(\int_{S^n} \lv u\rv^{\frac{2n}{n-2k}}\right)^{\frac{n-2k}{n}}
\end{equation}
with equality if and only if there is a constant $a\in\bR$ and a point $\xi\in B^{n+1}$, the unit ball in $\bR^{n+1}$, such that
\begin{equation}
 \label{eqn:standard_bubble}
 u(\zeta) = a\left(1+\xi\cdot\zeta\right)^{-\frac{n-2k}{2}} .
\end{equation}
Here $\omega_n$ is the volume of $S^n=\partial B^{n+1}$ and $L_{2k}$ is the GJMS operator~\cite{GJMS1992}
\begin{equation}
 \label{eqn:gjms}
 L_{2k} := \prod_{j=1}^k \left(-\Delta + \frac{(n-2j)(n+2j-2)}{4}\right) .
\end{equation}
The significance of the GJMS operators is that they are conformally covariant, and in particular~\eqref{eqn:sharp_sobolev} immediately implies a sharp functional inequality for the embedding $W^{k,2}(\bR^n)\subset L^{\frac{2n}{n-2k}}(\bR^n)$ when $n>2k$.

Beckner~\cite{Beckner1993} first proved~\eqref{eqn:sharp_sobolev} by noting that it is equivalent to a sharp Hardy--Littlewood--Sobolev inequality originally proven by Lieb~\cite{Lieb1983} using the Riesz rearrangement inequality.  The reliance of this proof on symmetric rearrangements means that it is not applicable to other settings, such as CR geometry, where there exist differential operators (e.g.\ \cite{GoverGraham2005,Graham1984}) but one cannot use symmetric rearrangements.  An alternative, rearrangement-free, proof of the sharp Hardy--Littlewood--Sobolev inequalities was later given by Frank and Lieb~\cite{FrankLieb2012b}.  This gives a new proof of~\eqref{eqn:sharp_sobolev} and leads to a proof of analogous inequalities on the CR spheres and Heisenberg group~\cite{FrankLieb2012a}.

There are other sharp Sobolev inequalities whose extremal functions take the form~\eqref{eqn:standard_bubble}.  For example, let $\sigma_2^g=\frac{1}{2(n-2)^2}\bigl(\frac{n}{4(n-1)}R^2-\lv\Ric\rv^2\bigr)$ denote the $\sigma_2$-curvature of a Riemannian metric~\cite{Viaclovsky2000}.  It is known that any conformally flat metric $g$ of positive scalar curvature on $S^n$ satisfies
\begin{equation}
 \label{eqn:sigma2_sobolev}
 \int_{S^n} \sigma_2^g\,\dvol_g \geq \frac{n(n-1)}{8}\omega_n^{4/n} \Vol_g(S^n)^{\frac{n-4}{n}}
\end{equation}
with equality if and only if $g$ has constant sectional curvature~\cite{GeWang2013,GuanWang2004}.  We can rewrite this to resemble~\eqref{eqn:sharp_sobolev} as follows: Let $L_{\sigma_2}\colon \bigl(C^\infty(S^n)\bigr)^3\to C^\infty(S^n)$ be the operator
\begin{multline*}
 L_{\sigma_2}(u,u,u) := \frac{1}{2}\delta\left(\lv\nabla u\rv^2\,du\right) - \frac{n-4}{16}\left( u\Delta\lv\nabla u\rv^2 - \delta\left((\Delta u^2)\,du\right)\right) \\ - \frac{n-1}{4}\left(\frac{n-4}{4}\right)^2u\Delta u^2 + \frac{n(n-1)}{8}\left(\frac{n-4}{4}\right)^3u^3
\end{multline*}
defined with respect to a metric $g_0$ of constant sectional curvature one on $S^n$.  Then~\eqref{eqn:sigma2_sobolev} is equivalently written
\begin{multline}
 \label{eqn:sigma2_sobolev_functional}
 \mE_{\sigma_2}(u,u,u) := \int_{S^n} u\,L_{\sigma_2}(u,u,u)\,\dvol_{g_0}  \\ \geq \frac{n(n-1)}{8}\left(\frac{n-4}{4}\right)^3\omega_n^{4/n}\left(\int_{S^n} u^{\frac{8}{n-4}}\dvol_{g_0}\right)^{\frac{n-4}{4}}
\end{multline}
for all positive $u\in C^\infty(S^n)$ such that $u^{\frac{8}{n-4}}g_0$ has positive scalar curvature.  See Remark~\ref{rk:sigma2-functional} for further discussion.

There are two proofs of~\eqref{eqn:sigma2_sobolev} in the literature, both of which classify the critical points of the functional $g\mapsto \Vol_g(S^n)^{-\frac{n-4}{n}}\int\sigma_2^g\,\dvol_g$ in the conformal class of the round metric.  Viaclovsky~\cite{Viaclovsky2000} gave a proof modeled on Obata's argument~\cite{Obata1971} for characterizing conformally flat metrics of constant scalar curvature on the sphere; see also~\cite{ChangGurskyYang2003b}.  A.\ Li and Y.\ Li~\cite{LiLi2003} gave a proof using the method of moving planes.  It is possible, but relatively difficult, to extend Obata's argument to classify locally spherical contact forms of constant scalar curvature on the sphere~\cite{JerisonLee1988,Wang2013}.  However, the method of moving planes is problematic because not all reflections across planes are CR isomorphisms.

The main results of this note are new proofs, following the Frank--Lieb argument, of the following two results characterizing local minimizers of the quotients implicit in~\eqref{eqn:sharp_sobolev} and~\eqref{eqn:sigma2_sobolev}.  We expect this argument can be further developed to classify local minimizers of other Sobolev quotients in Euclidean space and in the Heisenberg group.  Note that this argument has also been used to prove some sharp Sobolev trace inequalities~\cite{CaseWang2019}.

\begin{thm}
 \label{thm:classification}
 Let $(S^n,g_0)$ be the round $n$-sphere of constant sectional curvature one and let $u\in\mV_{2k}$,
 \begin{equation}
  \label{eqn:mC}
  \mV_{2k} := \left\{ u\in C^\infty(S^n) \suchthat \int_{S^n} \lv u\rv^{\frac{2n}{n-2k}}\,\dvol_{g_0} = \omega_n \right\} ,
 \end{equation}
 be a positive local minimizer of $\mE_{2k}\colon\mV_{2k}\to\bR$.  Then there is a $\xi\in B^{n+1}$ such that
 \[ u(\zeta) = \left(\frac{1+\xi\cdot\zeta}{(1-\lv\xi\rv^2)^{1/2}}\right)^{-\frac{n-2k}{2}} . \]
\end{thm}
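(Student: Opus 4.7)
The plan is to execute the Frank--Lieb rearrangement-free strategy. It has three stages: normalize $u$ by a conformal transformation; extract information from the second variation tested against $v_i = \zeta_i u$, where $\zeta_1,\dots,\zeta_{n+1}$ are the Euclidean coordinate functions restricted to $S^n\subset\bR^{n+1}$; and combine the result with a commutator identity for $L_{2k}$ to force $u$ to be constant. Since $L_{2k}$ is conformally covariant and $\mV_{2k}$ is conformally invariant, the class of positive local minimizers is preserved under the action of $\Conf(S^n)$ on $\mV_{2k}$ given by $u\mapsto e^{(n-2k)\tau/2}(u\circ\Phi)$ when $\Phi^*g_0 = e^{2\tau}g_0$. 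A Hersch-type degree argument lets us choose a Möbius transformation $\Phi$ so that, after this replacement, the normalized $u$ satisfies the conformal centering condition
\[
 \int_{S^n}\zeta_i\,u^{\frac{2n}{n-2k}}\,\dvol_{g_0} = 0, \qquad i=1,\dots,n+1;
\]
it then suffices to prove that any such centered local minimizer is constant.

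The Euler--Lagrange equation reads $L_{2k}u = \lambda\,u^{(n+2k)/(n-2k)}$ with $\lambda = \mE_{2k}(u)/\omega_n$, and the constrained second variation gives
\[
 \int_{S^n} v\,L_{2k}v\,\dvol_{g_0} \;\geq\; \frac{n+2k}{n-2k}\,\lambda\int_{S^n} u^{\frac{4k}{n-2k}} v^2\,\dvol_{g_0}
\]
for every $v\in C^\infty(S^n)$ satisfying $\int u^{(n+2k)/(n-2k)} v\,\dvol_{g_0}=0$. The centering condition is precisely the first-order admissibility of $v_i:=\zeta_i u$; summing the second variation over $i$ and using $\sum_i\zeta_i^2 = 1$ on $S^n$ yields
\[
 \sum_{i=1}^{n+1}\int_{S^n}(\zeta_i u)\,L_{2k}(\zeta_i u)\,\dvol_{g_0} \;\geq\; \frac{n+2k}{n-2k}\,\mE_{2k}(u).
\]

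The heart of the argument is the companion commutator identity
\[
 \sum_{i=1}^{n+1}\zeta_i\,L_{2k}(\zeta_i u) \;=\; L_{2k}u + \mM_{2k}u,
\]
where $\mM_{2k} := \sum_i\zeta_i[L_{2k},\zeta_i]$ is a self-adjoint operator of order strictly less than $2k$ on $(S^n,g_0)$ whose eigenvalues $m_\ell$ on degree-$\ell$ spherical harmonics satisfy $m_\ell\leq\tfrac{4k}{n-2k}\mu_\ell^{(2k)}$ with equality if and only if $\ell=0$; here $\mu_\ell^{(2k)} = \prod_{j=1}^k\bigl(\ell(\ell+n-1)+\tfrac{(n-2j)(n+2j-2)}{4}\bigr)$ is the $L_{2k}$-eigenvalue. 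For $k=1$ one checks directly, using $\Delta\zeta_i=-n\zeta_i$ and $\sum_i\zeta_i\nabla\zeta_i=0$, that $\mM_2 = n$ as a multiplication operator, and the bound reduces to $n\leq n+\tfrac{4}{n-2}\ell(\ell+n-1)$, sharp only at $\ell=0$. Establishing the identity and the spectral bound for general $k$ is the main obstacle, and this is precisely where the Fefferman--Graham ambient metric enters: realizing $L_{2k}$ through a power of an ambient Laplacian makes $\sum_i\zeta_i[L_{2k},\zeta_i]$ tractable in a $k$-uniform way.

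Integrating the commutator identity against $u$ and combining with the second-variation inequality forces $\int u\,\mM_{2k}u\,\dvol_{g_0} = \tfrac{4k}{n-2k}\mE_{2k}(u)$, the equality case of the spectral bound. This pins the spherical-harmonic expansion of $u$ to degree zero, so $u$ is constant. Undoing the Möbius transformation from the first paragraph and enforcing $\int u^{2n/(n-2k)}\,\dvol_{g_0} = \omega_n$ produces the claimed form $u(\zeta) = \bigl((1+\xi\cdot\zeta)/(1-\lv\xi\rv^2)^{1/2}\bigr)^{-(n-2k)/2}$ for some $\xi\in B^{n+1}$.
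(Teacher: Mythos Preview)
Your outline is correct and follows the same Frank--Lieb strategy as the paper: balance via a M\"obius transformation, feed $v_i=\zeta_i u$ into the second variation, and compare against a commutator identity for $\sum_i\zeta_i[L_{2k},\zeta_i]$.  The one substantive difference is in how the commutator step is handled.  You state an abstract spectral bound $m_\ell\le\frac{4k}{n-2k}\mu_\ell^{(2k)}$ for $\mM_{2k}$ and defer its proof to the ambient method; the paper instead proves the exact identity
\[
 \sum_{i} x^i[L_{2k},x^i] \;=\; k(n+2k-2)\,L_{2k-2}
\]
(its Theorem~1.3, via the ambient Laplacian), after which the endgame becomes a one-line algebraic consequence of the product formula~\eqref{eqn:gjms}: the inequality $\int u\,\mM_{2k}u\ge\frac{4k}{n-2k}\mE_{2k}(u)$ rearranges to
\[
 0 \;\ge\; \int_{S^n} u\,(-\Delta)L_{2k-2}u\,\dvol_{g_0},
\]
and $-\Delta L_{2k-2}\ge0$ with kernel exactly the constants.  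Your spectral inequality is of course equivalent to this once one knows $\mM_{2k}=k(n+2k-2)L_{2k-2}$, but the explicit identification is both the content of the ambient computation and what makes the rigidity step immediate without a separate eigenvalue comparison.
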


\begin{thm}
 \label{thm:sigma2_classification}
 Let $(S^n,g_0)$ be the round $n$-sphere of constant sectional curvature one and let $u\in\mV_{1,4}$,
 \begin{equation}
  \label{eqn:mV4}
  \mV_{1,4} := \left\{ u\in \mV_4 \suchthat u>0, R^{u^{\frac{8}{n-4}}g} >0 \right\} ,
 \end{equation}
 be a local minimizer of $\mE_{\sigma_2}\colon\mV_{1,4}\to\bR$.  Then there is a $\xi\in B^{n+1}$ such that
 \[ u(\zeta) = \left(\frac{1+\xi\cdot\zeta}{(1-\lv\xi\rv^2)^{1/2}}\right)^{-\frac{n-2k}{2}} . \]
\end{thm}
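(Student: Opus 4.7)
The plan is to mimic the Frank--Lieb strategy used to prove Theorem~\ref{thm:classification}, with the linear GJMS operator replaced by the trilinear operator $L_{\sigma_2}$ and with an ambient-metric commutator identity adapted to the $\sigma_2$-setting. Three ingredients are required: (i) conformal invariance of $\mE_{\sigma_2}$ on $\mV_{1,4}$, which allows normalization to a ``centered'' $u$; (ii) non-negativity of the second variation at a local minimizer; and (iii) a commutator identity, derived from the Fefferman--Graham ambient metric, for the linearization of $L_{\sigma_2}$ against multiplication by the ambient coordinate functions $\zeta_1,\dots,\zeta_{n+1}$.

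First I would write the Euler--Lagrange equation: there is a Lagrange multiplier $\lambda$, proportional to $\mE_{\sigma_2}(u,u,u)/\omega_n$, such that $L_{\sigma_2}(u,u,u)= c\lambda\, u^{q-1}$ with $q = \tfrac{2n}{n-4}$. Exploiting the $\Conf(S^n)$-invariance of $\mE_{\sigma_2}$ through the rescaled action $u\mapsto |J_\phi|^{(n-4)/(2n)}(u\circ\phi)$, together with the conformal invariance and openness of the positive-scalar-curvature condition, a standard Brouwer-type argument parametrized by $\xi\in B^{n+1}$ yields a conformal transformation after which $\int_{S^n}\zeta_i u^q\,\dvol_{g_0}=0$ for each $i$. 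Replacing $u$ by its transform, I may assume $u$ is already centered. At this centered critical point, the second variation of $\mE_{\sigma_2}-\lambda\int u^q$ is non-negative on directions tangent to the constraint. I would test against $v_i := \zeta_i u$, which is admissible precisely because centering forces $\int u^{q-1}v_i=0$. Summing the $n+1$ resulting inequalities and using $\sum_i\zeta_i^2\equiv 1$ produces a stability inequality relating $\sum_i \mathcal{Q}_u(\zeta_i u,\zeta_i u)$ to $\mE_{\sigma_2}(u,u,u)$, where $\mathcal{Q}_u(v,w) := \int v\,\mathcal{L}_u w\,\dvol_{g_0}$ and $\mathcal{L}_u$ denotes the symmetrized linearization of $L_{\sigma_2}(\cdot,u,u)$ in its first slot.

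The crux of the argument is then to evaluate $\sum_i \mathcal{Q}_u(\zeta_i u,\zeta_i u)$ via a Fefferman--Graham ambient-metric commutator identity. Commuting multiplication by $\zeta_i$ past $\mathcal{L}_u$, one expects an identity of the schematic form
\[
 \sum_{i=1}^{n+1}\mathcal{Q}_u(\zeta_i u,\zeta_i u) = \mE_{\sigma_2}(u,u,u) + \Phi(u),
\]
where $\Phi(u)$ is a lower-order conformally natural functional that itself satisfies a sharp inequality saturated only at constants (on the centered side). Substituting into the stability inequality forces $\Phi(u)$ to attain its sharp bound, and the rigidity of this lower-order inequality, combined with the centering normalization, forces $u$ to be constant on $S^n$. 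Undoing the Step~2 conformal transformation then produces $u$ in the claimed bubble form. The main obstacle is step~(iii): because $\mathcal{L}_u$ depends quadratically on $u$, the commutator $[\mathcal{L}_u,\zeta_i]$ carries $u$-dependent tensorial corrections whose integrals are not visibly meaningful on $S^n$. Recognizing that these corrections reassemble, after integration by parts, into a single conformally natural $\Phi(u)$ controlled by a previously known sharp inequality is exactly what the ambient-metric calculus is designed to deliver, and it is this identification that carries the weight of the proof.
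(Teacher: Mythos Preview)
Your overall strategy matches the paper's exactly: balance $u$ by a conformal transformation (Lemma~\ref{lem:balanced}), test the second variation against $v_i=x^iu$ to obtain a stability inequality expressed through the commutator $\sum_i\int x^iu\,[L_{\sigma_2},x^i](u,u,u)$ (Proposition~\ref{prop:stable}), compute that commutator via the ambient metric (Theorem~\ref{thm:sigma2_commutator}), and conclude $u$ is constant. So the architecture is right.

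The gap is in your step~(iii), and it is precisely where you yourself flag the ``main obstacle''. You anticipate that the commutator identity will produce a lower-order functional $\Phi(u)$ that is controlled by some \emph{previously known sharp inequality}, whose rigidity then forces $u$ to be constant. That is not what happens. The ambient computation (Theorem~\ref{thm:sigma2_commutator}) gives the exact identity
\[
 \sum_{i} x^i[L_{\sigma_2},x^i](u,u,u)=\tfrac{n-1}{3}\,u\,\sigma_1(u),
\]
so the left side of the stability inequality becomes $\tfrac{n-1}{3}\int u^2\sigma_1(u)$. Combining this with the explicit Dirichlet form for $\mE_{\sigma_2}$ (Corollary~\ref{cor:sigma2-dirichlet-form}) and rearranging, the paper arrives at
\[
 0\geq \int_{S^n}\Bigl[\tfrac{16}{3(n-4)}\bigl(\sigma_1(u)+\tfrac12\lv\nabla u\rv^2\bigr)\lv\nabla u\rv^2+\tfrac{n+2}{6}u^2\lv\nabla u\rv^2\Bigr].
\]
No auxiliary sharp inequality is invoked; what forces the sign is the hypothesis $u\in\mV_{1,4}$, i.e.\ $\sigma_1(u)>0$. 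This is the one place in the argument where the positive-scalar-curvature restriction in the definition of $\mV_{1,4}$ is actually used, and your proposal never calls on it. Without identifying that the commutator produces $\sigma_1(u)$ and that its positivity is supplied by hypothesis rather than by a further inequality, the proof does not close.
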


The Frank--Lieb argument involves three elements.  First, the assumption of a local minimizer implies, via the second variation, a nice spectral estimate; see Proposition~\ref{prop:stable} for a precise statement in terms of a general conformally covariant functional.  Second, conformal covariance implies that one can assume that $u$ is ``balanced'' (see Lemma~\ref{lem:balanced}), and in particular use first spherical harmonics as test functions in the previous spectral estimate.  Third, one deduces that a balanced positive local minimizer is constant by computing a particular commutator formula involving the relevant operator and a first spherical harmonic.  The commutator formulae needed for Theorems~\ref{thm:classification} and~\ref{thm:sigma2_classification} are given by the following two results.

\begin{thm}
 \label{thm:gjms_commutator}
 Let $(S^n,g_0)$ be the round $n$-sphere of constant sectional curvature one, regarded as the unit ball in $\bR^{n+1}$, and let $L_{2k}$ denote the GJMS operator of order $2k$.  Then
 \[ \sum_{i=0}^n x^i[L_{2k},x^i] = k(n+2k-2)L_{2k-2} \]
 for all $u\in C^\infty(S^n)$, where $x^0,\dotsc,x^n$ are the standard coordinates on $\bR^{n+1}$ and
 \[ [L_{2k},x^i](u) := L_{2k}(x^iu) - x^iL_{2k}(u) . \]
\end{thm}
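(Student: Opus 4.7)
The plan is to use the Fefferman--Graham ambient metric. For the round $n$-sphere, this is the flat Minkowski metric on $\bR^{n+2}$ with coordinates $(T, X^0, \ldots, X^n)$; the sphere $S^n$ is realized as the cross-section $\{T = 1\}$ of the forward null cone $\{P = 0\}$, where $P := T^2 - |X|^2$. The sphere coordinates $x^i$ are the restrictions of the ambient coordinates $X^i$, $i = 0, \ldots, n$. The GJMS operator is characterized by $L_{2k}u = \widetilde{\Delta}^k \widetilde{u}|_{S^n}$ for an appropriate homogeneous extension $\widetilde{u}$ of $u$ of degree $w := -(n-2k)/2$, where $\widetilde{\Delta}$ is the ambient Laplacian.

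Two ambient identities drive the argument. The first is the fundamental commutator $[\widetilde{\Delta}, X^A] = 2\partial^A$ (with the index raised by the ambient metric), which iterates to $[\widetilde{\Delta}^k, X^A] = 2k\,\partial^A \widetilde{\Delta}^{k-1}$. The second follows from $\widetilde{\Delta}P = 2(n+2)$ and $\widetilde{\nabla}P = 2E$, where $E$ is the Euler field: for any homogeneous $f$ of degree $d$,
\[ \widetilde{\Delta}(Pf) = \bigl[2(n+2) + 4d\bigr]f + P\,\widetilde{\Delta}f. \]
I will take $\widetilde{u}_{(i)} := X^i \widetilde{u}/T$, the natural degree-$w$ extension of $x^i u$, so that $L_{2k}(x^iu) = \widetilde{\Delta}^k \widetilde{u}_{(i)}|_{S^n}$.

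The core computation sums $X^i \widetilde{u}_{(i)}$ over $i$, yielding
\[ \sum_{i=0}^n X^i \widetilde{u}_{(i)} = T\widetilde{u} - P\widetilde{u}/T, \]
which is the ambient lift of the identity $\sum_i (x^i)^2 = 1$ on $S^n$. Applying $\widetilde{\Delta}^k$ and restricting to $S^n$: the $T\widetilde{u}$ piece, after commuting $T$ through $\widetilde{\Delta}^k$, contributes $L_{2k}u$ plus a correction in $\partial_T\widetilde{\Delta}^{k-1}\widetilde{u}|_{S^n}$; the $P\widetilde{u}/T$ piece, iterated through the $\widetilde{\Delta}(Pf)$ formula, collapses to a scalar multiple of $\widetilde{\Delta}^{k-1}(\widetilde{u}/T)|_{S^n}$. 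The crucial observation is that $\widetilde{u}/T$ has degree $w - 1 = -(n - 2(k-1))/2$, the correct degree for $L_{2k-2}$, so $\widetilde{\Delta}^{k-1}(\widetilde{u}/T)|_{S^n} = L_{2k-2}u$.

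The main obstacle is the bookkeeping of the normal-derivative terms $\partial_T\widetilde{\Delta}^{k-1}\widetilde{u}|_{S^n}$ and $\sum_i \partial_{X^i}\widetilde{\Delta}^{k-1}\widetilde{u}_{(i)}|_{S^n}$ arising from the commutators $[\widetilde{\Delta}^k, T]$ and $[\widetilde{\Delta}^k, X^i]$. Here the Euler identity $E\widetilde{u} = w\widetilde{u}$ is decisive: it consolidates the combination $\partial_T\widetilde{u} + \sum_i \partial_{X^i}\widetilde{u}_{(i)}$ into the single scalar $\tfrac{n+2k+2}{2}\,\widetilde{u}/T$, which after applying $\widetilde{\Delta}^{k-1}$ and restricting becomes yet another multiple of $L_{2k-2}u$. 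Assembling all coefficients yields the claimed identity $\sum_{i=0}^n x^i[L_{2k},x^i] = k(n+2k-2)\,L_{2k-2}$.
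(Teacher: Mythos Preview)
Your approach is correct and is essentially the same ambient-space computation as the paper's, driven by the commutator $[\cDelta^k,X^A]=2k\,\partial^A\cDelta^{k-1}$ together with homogeneity. The paper organizes the bookkeeping more cleanly: rather than tracking $\partial_T\cDelta^{k-1}\cu$, $\sum_i\partial_i\cDelta^{k-1}\cu_{(i)}$, and the iterated $\cDelta^k(Pf)$ term separately, it observes that the antisymmetric combination $\ctau(-\cDelta)^k\cx^i-\cx^i(-\cDelta)^k\ctau$ equals $-2k\,\cmL_{\cX(i)}(-\cDelta)^{k-1}$ for the conformal Killing field $\cX(i)=\ctau\partial_{\cx^i}+\cx^i\partial_{\ctau}$, and then uses the single algebraic identity $\sum_i\cx^i\cX(i)=\ctau T+Q\partial_{\ctau}$ to collapse everything at once (the Euler piece acts by a scalar, the $Q$ piece dies on the null cone). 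Your three-term split and the paper's Killing-field packaging are two arrangements of the same calculation; the paper's version has the side benefit of yielding a formula for each individual $[L_{2k},x^i]$ (its Corollary~3.4), not just the sum. One caution on signs: your formulas $\cDelta P=2(n+2)$ and $\cnabla P=2E$ require the signature $(+,-,\dotsc,-)$ on the ambient space, so that $L_{2k}=\cDelta^k\rvert_{S^n}$ (rather than $(-\cDelta)^k$); with that convention your coefficient assembly does give $k(n+2k-2)$ as claimed.
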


\begin{thm}
 \label{thm:sigma2_commutator}
 Let $(S^n,g_0)$ be the round $n$-sphere of constant sectional curvature one, regarded as the unit ball in $\bR^{n+1}$, and let $L_{\sigma_2}$ denote the $\sigma_2$-operator.  Then
 \[ \sum_{i=0}^n x^i[L_{\sigma_2},x^i](u,u,u) = \frac{n-1}{3}u\sigma_1(u) \]
 for all $u\in C^\infty(S^n)$, where $x^0,\dotsc,x^n$ are the standard coordinates on $\bR^{n+1}$ and
 \[ [L_{\sigma_2},x^i](u,u,u) := L_{\sigma_2}(x^iu,u,u) - x^iL_{\sigma_2}(u,u,u) . \]
\end{thm}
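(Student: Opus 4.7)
The plan is to emulate the strategy of Theorem~\ref{thm:gjms_commutator}, using the Fefferman--Graham ambient metric of $(S^n,g_0)$. For the round sphere, this ambient space is flat Minkowski space $(\bR^{n+1,1},\widetilde{g})$ with $\widetilde{g}=\sum_{i=0}^{n}(dx^i)^2-(dx^{n+1})^2$, and $S^n$ arises as the intersection of the future null cone with the hyperplane $\{x^{n+1}=1\}$. The coordinates $x^0,\dotsc,x^n$ extend naturally to linear coordinates on the ambient space, and commutators of ambient operators with them are controlled by the single identity $[\widetilde{\Delta},x^i]=2\partial_{x^i}$ together with the Leibniz rule.

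First I would identify a cubic ambient differential expression $\widetilde{L}_{\sigma_2}(\widetilde{u},\widetilde{u},\widetilde{u})$ on $\bR^{n+1,1}$, built from $\widetilde{\Delta}$ and multiplication by $\widetilde{u}$, whose restriction to $S^n$---applied to a homogeneous extension $\widetilde{u}$ of $u$ of the appropriate conformal weight---reproduces $L_{\sigma_2}(u,u,u)$. Since the ambient metric is flat, $\widetilde{L}_{\sigma_2}$ is assembled from constant-coefficient operators, so its commutator with the linear function $x^i$ reduces to a first-order ambient operator that can be written down explicitly.

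Second, I would compute $\sum_{i=0}^{n}x^i[\widetilde{L}_{\sigma_2},x^i](\widetilde{u},\widetilde{u},\widetilde{u})$ at the ambient level. The identity $\sum_{i=0}^{n}(x^i)^2=(x^{n+1})^2$ on the null cone, together with the fact that the spatial Euler field $\sum_i x^i\partial_{x^i}$ acts on homogeneous functions through $x^{n+1}\partial_{x^{n+1}}$ up to a scalar, eliminates the leading fourth-order contributions. The surviving terms, once pulled back to $S^n$ using the intrinsic identities $\sum_i\nabla x^i\otimes\nabla x^i=g_0$ and $\nabla^2 x^i=-x^ig_0$, should assemble into a multiple of $u\sigma_1(u)$.

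The principal obstacle is twofold. Identifying the correct ambient realization of $L_{\sigma_2}$ is subtler than for the GJMS operators, since $L_{\sigma_2}$ is a \emph{nonlinear} conformal covariant assembled from several cubic pieces (divergences of $\lv\nabla u\rv^2\,du$, terms involving $\Delta u^2$, and the pure power $u^3$); one must produce a single ambient expression that reproduces all of them simultaneously. Second, matching the precise constant $\frac{n-1}{3}$ requires careful bookkeeping of the dimensional coefficients $\frac{n-4}{4}$ and $\frac{n-1}{4}$ built into the definition of $L_{\sigma_2}$ against the combinatorial factors generated by expanding the trilinear commutator, leaving little margin for error in the accounting.
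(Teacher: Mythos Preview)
Your plan is essentially the paper's own approach: lift to the flat ambient space, use the explicit ambient realization of $L_{\sigma_2}$, compute the commutator there, and collapse the sum over $i$ using Euler-field homogeneity together with $Q=0$ on the null cone. The ambient lift you say you would need to find is already provided in the paper as the symmetrization~\eqref{eqn:ambient-sigma2-operator} of the trilinear operator $\cB$ in~\eqref{eqn:cB}; the paper then computes $\cB(\cu,\cu,\cx\cv)$ and $\cB(\cx\cv,\cu,\cu)$ directly from that formula, and the right-hand side organizes itself around the tangential scalar $\cI(\cu)$ of Lemma~\ref{lem:thomas-D}, which descends to $\sigma_1(u)$ --- so the constant $\tfrac{n-1}{3}$ drops out without the delicate bookkeeping you anticipate.

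One point you glossed over that the paper handles explicitly: since $\cx^i\in\cmE[1]$, the product $\cx^i\cu$ does \emph{not} lie in $\cmE\bigl[-\tfrac{n-4}{4}\bigr]$, so $\cL_{\sigma_2}(\cx^i\cu,\cu,\cu)$ is not a priori tangential and there is no clean ``ambient commutator $[\widetilde L_{\sigma_2},x^i]$'' acting on a single weight. The paper sidesteps this by inserting an auxiliary density $\cv\in\cmE\bigl[-\tfrac{n}{4}\bigr]$ and forming the antisymmetrized combination
\[
 \ctau\,\cL_{\sigma_2}(\cx^i\cv,\cu,\cu)-\cx^i\,\cL_{\sigma_2}(\ctau\cv,\cu,\cu),
\]
in which each argument has the correct weight; the resulting identity~\eqref{eqn:ambient-sigma2-sum-commutator} is then manifestly tangential, and specializing to $\ctau=1$ and $v=u$ on $(S^n,g_0)$ gives the theorem. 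Your outline would run into this weight mismatch when you tried to ``restrict'' the naive ambient commutator, so be aware that this repackaging (or an equivalent device) is needed to make the descent rigorous.
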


Here
\begin{equation}
 \label{eqn:sigma1}
 \sigma_1(u) := -\frac{n-4}{8}\Delta u^2 - \lv\nabla u\rv^2 + \frac{n}{2}\left(\frac{n-4}{4}\right)^2u^2 .
\end{equation}
This operator is distinguished by the fact that it is a bidifferential operator which, when $u$ is positive, computes, up to multiplication by a constant and a power of $u$, the scalar curvature of $u^{\frac{8}{n-4}}g_0$; see Remark~\ref{rk:sigma1} for a precise formula.

We give simple proofs of Theorems~\ref{thm:gjms_commutator} and~\ref{thm:sigma2_commutator} using the Fefferman--Graham ambient space~\cite{FeffermanGraham2012}, where the formulae for $L_{2k}$ and $L_{\sigma_2}$ simplify considerably.  Interestingly, this technique also gives a conformally covariant identity relating the commutator of $L_{2k}$ and a first spherical harmonic to $L_{2k-2}$; see Proposition~\ref{prop:commutator}. We expect this technique to apply to a wide variety of conformally covariant operators (cf.\ \cite{CaseLinYuan2018b}).  In the cases studied here, the commutators in Theorems~\ref{thm:gjms_commutator} and ~\ref{thm:sigma2_commutator} have good properties which, when combined with the aforementioned spectral estimate, yield the proofs of Theorems~\ref{thm:classification} and~\ref{thm:sigma2_classification}.

This note is organized as follows.  In Section~\ref{sec:ambient} we recall some necessary facts about Fefferman--Graham ambient spaces, including the ambient definition of the GJMS and $L_{\sigma_2}$-operators.  In Section~\ref{sec:commutator} we prove Theorems~\ref{thm:gjms_commutator} and~\ref{thm:sigma2_commutator}.  In Section~\ref{sec:hersch} we deduce spectral estimates for local minimizers of a general conformally covariant functional and then apply them to prove Theorems~\ref{thm:classification} and~\ref{thm:sigma2_classification}.
\section{Some facts about the ambient space}
\label{sec:ambient}

Let $(S^n,g_0)$ denote the round $n$-sphere of constant sectional curvature one and let $(\bR_+^{n+1,1},\cg_0)$ denote the $(n+2)$-dimensional upper Minkowski space with coordinates $(\cx^0,\dotsc,\cx^n,\ctau)\in\bR^{n+1}\times(0,\infty)$ and metric
\[ \cg_0 = -d\ctau\otimes d\ctau + \sum_{i=0}^n d\cx^i\otimes d\cx^i . \]
Let $T:=\cx^i\partial_i$ denote the Euler vector field on $\bR_+^{n+1,1}$ and let
\[ \mN := \left\{ (\cx,\ctau) \in \bR_+^{n+1,1} \suchthat \lv \cx\rv^2 = \ctau^2 \right\} \]
denote the positive null cone.

The projectivization $P\mN$ of the positive null cone is diffeomorphic to $S^n$.  Let $U\colon S^n\cong P\mN\to\mN$ be a section of $\mN$.  Then
\begin{equation}
 \label{eqn:section}
 U(\zeta)=\bigl(u(\zeta)\zeta,u(\zeta)\bigr) 
\end{equation}
for a positive function $u\in C^\infty(S^n)$.  As $T$ is null with respect to the restriction of $\cg_0$ to $\mN$, each section $U$ of $\mN$ determines a Riemannian metric on $S^n$ by considering the inner product induced by $\cg_0\rv_{\mN}$ on $T_{U(x)}\mN/\spansymbol T\cong T_{x} S^n$ for each $x\in S^n$.  This identifies the section~\eqref{eqn:section} with $(S^n,u^2g_0)$.  In particular, we identify
\[ (S^n,g_0) = \left\{ (\cx,\ctau)\in\mN \suchthat \ctau=1 \right\} . \]

Given $w\in\bR$, denote by
\[ \cmE[w] := \left\{ \cf\in C^\infty(\bR_+^{n+1,1}) \suchthat T\cf = w\cf \right\} \]
the space of homogeneous functions of degree $w$ in Minkowski upper half space.  The conformal density bundle is the restriction
\[ \mE[w] := \left\{ \cf\rv_{\mN} \suchthat \cf\in\cmE[w] \right\} \]
of $\cmE[w]$ to $\mN$.  Given  a positive function $u\in C^\infty(S^n)$, define $E_w^u\colon C^\infty(S^n)\to\mE[w]$ by
\[ E_w^u(v)(\cx,\ctau) := \left(\frac{\ctau}{u(\cx/\ctau)}\right)^wv\left(\frac{\cx}{\ctau}\right) . \]
Note that $E_w^u(v)\bigl(u(\zeta)\zeta,u(\zeta)\bigr)=v(\zeta)$; that is, $E_w^u$ gives the standard trivialization of $\mE[w]$ induced by a metric $u^2g_0$.

Denote $Q:=\lv\cx\rv^2-\ctau^2$.  Then $Q\in\cmE[2]$ is a defining function for $\mN\subset\bR_+^{n+1,1}$; that is, $\mN=Q^{-1}(0)$ and $dQ\rv_\mN$ is nowhere vanishing.  An operator $\cD\colon\cmE[w]\to\cmE[w^\prime]$ is \emph{tangential} if $\cD(\cf)\rv_{\mN}$ depends only on $\cf\rv_{\mN}$.  Since $Q$ is a defining function for $\mN$, we see that $\cD$ is tangential if and only if $\cD(Q\cf)\rv_\mN=0$ for all $\cf\in\cmE[w-2]$.  Importantly, if $\cD\colon\cmE[w]\to\cmE[w^\prime]$ is tangential, then it restricts to $\mN$ as a conformally covariant operator of bidegree $(-w,-w^\prime)$.  More concretely, let $u^2g_0$ be a conformally flat metric on $S^n$.  We define the \emph{associated operator} $D_{u^2g_0}\colon C^\infty(M)\to C^\infty(S^n)$ by
\[ D_{u^2g_0}(v)(\zeta) := \cD(E_w^uv)\bigl(u(\zeta)\zeta,u(\zeta)\bigr) \]
for all $v\in C^\infty(S^n)$.  Since $\cD$ is tangential, we conclude that $D_{u^2g_0}$ is well-defined.  Moreover, the homogeneity assumptions on $\cD$ imply that
\[ D_{e^{2\Upsilon}g} = e^{w^\prime\Upsilon} \circ D_g \circ e^{-w\Upsilon} \]
for all $\Upsilon\in C^\infty(S^n)$, where $e^{w^\prime\Upsilon}$ and $e^{-w\Upsilon}$ are regarded as multiplication operators.

In this note we are primarily interested in two families of conformally covariant operators, both of which come from tangential operators in the ambient space.

First, let $\cDelta$ denote the Laplacian of $(\bR_+^{n+1,1},\cg_0)$.  For any $k\in\bN$, the operator $(-\cDelta)^k\colon\cmE\bigl[-\frac{n-2k}{2}\bigr]\to\cmE\bigl[-\frac{n+2k}{2}\bigr]$ is tangential~\cite{GJMS1992}.  The associated operator $L_{2k}\colon\mE\bigl[-\frac{n-2k}{2}\bigr]\to\mE\bigl[-\frac{n-2k}{2}\bigr]$, when evaluated at $g_0$, is the GJMS operator~\eqref{eqn:gjms}.  The fact that $L_{2k}$ is conformally covariant of bidegree $\bigl(\frac{n-2k}{2},\frac{n+2k}{2}\bigr)$ implies that
\[ \mE_{2k}^{e^{2\Upsilon}g_0}(u) = \mE_{2k}^{g_0}\left(e^{\frac{n-2k}{2}\Upsilon}u\right) \]
for all $u,\Upsilon\in C^\infty(S^n)$, where
\[ \mE_{2k}^g(u) := \int_{S^n} u\,L_{2k}^gu\,\dvol_g . \]

Second, let $\cdelta=\tr_{\cg}\cnabla$ denote the divergence and $\cd$ denote the exterior derivative on $(\bR_+^{n+1,1},\cg_0)$.  Define $\cB\colon\bigl(\cmE\bigl[-\frac{n-4}{4}\bigr]\bigr)^3\to\cmE\bigl[-\frac{3n+4}{4}\bigr]$ by
\begin{equation}
 \label{eqn:cB}
 \cB(\cu,\cv,\cw) := \frac{1}{2}\cdelta\left(\lp\cnabla u,\cnabla v\rp\,\cd\cw\right) - \frac{n-4}{16}\left[ \cw\cDelta\lp\cnabla u,\cnabla v\rp - \cdelta\left(\cDelta(uv)\,\cd w\right) \right] .
\end{equation}
Then $\cB$ is tangential~\cite{CaseLinYuan2018b}.  Define
\begin{equation}
 \label{eqn:ambient-sigma2-operator}
 \cL_{\sigma_2}(\cu,\cv,\cw) := \frac{1}{3}\left(\cB(\cu,\cv,\cw) + \cB(\cw,\cu,\cv) + \cB(\cv,\cw,\cu)\right) ,
\end{equation}
so that $\cL_{\sigma_2}$ is tangential on $\bigl(\cmE\bigl[-\frac{n-4}{4}\bigr]\bigr)^3$ and symmetric in its arguments.  We require the formula for the operators $L_{\sigma_2}$ associated to $\cL_{\sigma_2}$.

\begin{lem}
 \label{lem:sigma2-operator}
 Let $(S^n,g)$ be a conformally flat manifold.  Then the operator $L_{\sigma_2}$ associated to $\cL_{\sigma_2}$ is the polarization of
 \begin{multline}
  \label{eqn:sigma2-operator}
  L_{\sigma_2}(u,u,u) = \frac{1}{2}\delta\left(\lv\nabla u\rv^2\,du\right) - \frac{n-4}{16}\left(u\Delta\lv\nabla u\rv^2 - \delta\left((\Delta u^2)\,du\right)\right) \\ - \frac{1}{2}\left(\frac{n-4}{4}\right)^2 u\delta\left(T_1(\nabla u^2)\right) + \left(\frac{n-4}{4}\right)^3\sigma_2u^3 ,
 \end{multline}
 where $\sigma_2:=\frac{1}{2}(J^2-\lv P\rv^2)$ is the $\sigma_2$-curvature and $T_1:=Jg-P$ is the first Newton tensor, stated in terms of the Schouten tensor $P:=\frac{1}{n-2}(\Ric-Jg)$ and its trace $J:=\tr_gP=\frac{R}{2(n-1)}$.
\end{lem}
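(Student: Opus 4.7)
The plan is to evaluate the ambient trilinear form $\cB$ in explicit coordinates and restrict the result to the null cone, identifying the answer on the base with the claimed expression. Since $\cB$ is tangential, it suffices to compute $\cB(\cu,\cu,\cu)\rv_{\mN}$ for any convenient homogeneous extension $\cu\in\cmE[-\frac{n-4}{4}]$ of $u$, then polarize to recover the trilinear form $L_{\sigma_2}$.

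First, I would exploit the conformal flatness of $g$ to realize the ambient space as flat Minkowski $(\bR_+^{n+1,1},\cg_0)$, with the metric $g$ corresponding to an explicit section of $\mN$. In these coordinates the ambient Laplacian, divergence, and inner product take their flat forms, and the Euler relation $T\cu=w\cu$ with $w=-\frac{n-4}{4}$ gives explicit relations between the normal derivatives of $\cu$ along $\mN$ and its boundary value $u$. I would fix $\cu$ to be harmonic modulo $Q^2$ in the ambient sense, as in the GJMS construction: this pins down the first transverse Taylor coefficient of $\cu$ in terms of $\Delta u$ and $Ju$, and makes $\cDelta\cu\rv_{\mN}$ computable directly from these quantities.

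Next, I would expand each of the three summands of $\cB(\cu,\cu,\cu)$. The term $\frac{1}{2}\cdelta(\lv\cnabla\cu\rv^2\,\cd\cu)\rv_\mN$ produces the leading base expression $\frac{1}{2}\delta(\lv\nabla u\rv^2\,du)$ together with corrections involving $J$ and $P$ arising from the difference between $\cnabla$ and $\nabla$ in the null direction. For $\cu\,\cDelta\lv\cnabla\cu\rv^2$, I would apply the ambient Bochner identity, using that the ambient Ricci vanishes since the ambient space is flat; this yields $u\Delta\lv\nabla u\rv^2$ plus additional $J$ and $P$ terms. The remaining summand $\cdelta(\cDelta(\cu^2)\,\cd\cu)\rv_\mN$ is handled analogously, reproducing $\delta((\Delta u^2)\,du)$ modulo curvature corrections of the same type.

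The final step is to collect all curvature contributions. Using the algebraic relations $T_1=Jg-P$ and $\sigma_2=\frac{1}{2}(J^2-\lv P\rv^2)$, the accumulated corrections should recombine exactly into $-\frac{1}{2}(\frac{n-4}{4})^2 u\,\delta(T_1(\nabla u^2))+(\frac{n-4}{4})^3\sigma_2 u^3$. I expect the main obstacle to be the careful bookkeeping of numerical constants arising from the weight $w$ and from distributing derivatives across the triple product; conformal flatness will be used decisively to rewrite all ambient Hessian contractions purely in terms of $P$ and $J$, so that no Weyl-type residue survives.
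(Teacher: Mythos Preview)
Your plan is correct in principle but takes a different and considerably longer route than the paper. The paper does not work in the Cartesian Minkowski coordinates $(\cx^0,\dotsc,\cx^n,\ctau)$ at all; instead it passes immediately to the Fefferman--Graham normal form
\[
\cg_0 = 2\rho\,dt\otimes dt + t\,(dt\otimes d\rho + d\rho\otimes dt) + t^2\bigl(g + 2\rho P + \rho^2 P^2\bigr),
\]
with $(S^n,g)$ sitting at $\{t=1,\rho=0\}$. In these coordinates the homogeneous extension is simply $\cu=t^{-\frac{n-4}{4}}u$, the transverse derivative $\partial_\rho$ replaces your ``harmonic modulo $Q^2$'' normalization, and---crucially---the Schouten tensor $P$ is already built into the $\rho$-expansion of the metric. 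One then computes $\cnabla$, $\cDelta$, $\cdelta$ in $(t,\rho,x)$ coordinates and reads off the $T_1$ and $\sigma_2$ terms directly from the coefficients of $g_\rho$ at orders $\rho^0,\rho^1,\rho^2$. No Bochner identity, no harmonic extension, and no separate collection of curvature corrections are needed.

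Your approach, by contrast, keeps the ambient metric trivial and pushes all of the curvature of $g$ into the geometry of the section $U\colon S^n\to\mN$; the Schouten tensor then has to be recovered from the second fundamental form of that section and from the transverse Taylor coefficients of your chosen extension. This works, but the bookkeeping you anticipate is real, and it duplicates exactly what the Fefferman--Graham normal form is designed to organize for you. One minor point: your closing remark that conformal flatness is needed ``decisively'' to kill Weyl-type residues overstates its role. As the paper notes in the remark following the lemma, the computation uses only the expansion $g_\rho=g+2\rho P+O(\rho^2)$ and $\tr_g g_\rho = n+2\rho J+\rho^2\lv P\rv^2+O(\rho^3)$, which hold for any metric in dimension $n\geq 3$; conformal flatness enters only to guarantee that the ambient construction is globally flat Minkowski.
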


\begin{proof}
 This follows easily from the expansion~\cite{FeffermanGraham2012}
 \[ \cg_0 = 2\rho\,dt\otimes dt + t(dt\otimes d\rho + d\rho\otimes dt) + t^2(g + 2\rho P + \rho^2P^2) \]
 of the ambient metric and the fact that $(S^n,g)=t^{-1}(1)\cap\rho^{-1}(0)$ in these coordinates.
\end{proof}

\begin{remark}
 In fact, both $\cB$ and $\cL_{\sigma_2}$ are tangential for arbitrary ambient spaces~\cite{CaseLinYuan2018b}.  The computation in the proof of Lemma~\ref{lem:sigma2-operator} requires only the fact that the ambient metric takes the form
 \[ \cg_0 = 2\rho\,dt^2 + 2t\,dt\,d\rho + t^2g_\rho \]
 for $g_\rho$ a one-parameter family of metrics on $M$ with $g_\rho=g+2\rho P+O(\rho^2)$ and $\tr_g g_\rho = n + 2\rho J + \rho^2\lv P\rv^2 + O(\rho^3)$.  These properties hold in all dimensions $n\geq3$~\cite{FeffermanGraham2012}.
\end{remark}

We also require the formula for the Dirichlet form associated to $L_{\sigma_2}^{g_0}$.

\begin{cor}
 \label{cor:sigma2-dirichlet-form}
 Let $(S^n,g_0)$ be the round $n$-sphere.  Then
 \begin{multline*}
  \int_{S^n} u\,L_{\sigma_2}(u,u,u) = \int_{S^n} \biggl[ \lv\nabla u\rv^2\sigma_1(u) + \frac{1}{2}\lv\nabla u\rv^4 \\ + \frac{n-2}{2}\left(\frac{n-4}{4}\right)^2u^2\lv\nabla u\rv^2 + \frac{n(n-1)}{8}\left(\frac{n-4}{4}\right)^3u^4 \biggr] ,
 \end{multline*}
 where $\sigma_1(u)$ is given by~\eqref{eqn:sigma1} and all geometric quantities are defined using $g_0$.
\end{cor}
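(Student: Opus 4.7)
The plan is to specialize the formula of Lemma~\ref{lem:sigma2-operator} to the round metric and then compute the Dirichlet form $\int u\,L_{\sigma_2}(u,u,u)$ by term-wise integration by parts, finally regrouping so that the combination $\lv\nabla u\rv^2\sigma_1(u)$ becomes visible.

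First, on $(S^n,g_0)$ one has $P=\frac{1}{2}g_0$, $J=\frac{n}{2}$, $T_1=\frac{n-1}{2}g_0$, and $\sigma_2=\frac{n(n-1)}{8}$; plugging these into \eqref{eqn:sigma2-operator} reproduces the formula for $L_{\sigma_2}(u,u,u)$ displayed in the introduction. Next, I would handle the four resulting integrals separately: the first, $\frac{1}{2}\int u\,\delta(\lv\nabla u\rv^2\,du)$, becomes a multiple of $\int\lv\nabla u\rv^4$; the paired terms $\int u^2\Delta\lv\nabla u\rv^2$ and $\int u\,\delta((\Delta u^2)\,du)$ both reduce, using one integration by parts together with the self-adjointness identity $\int u^2\Delta\lv\nabla u\rv^2=\int\lv\nabla u\rv^2\Delta u^2$, to a common multiple of $\int\lv\nabla u\rv^2\Delta u^2$; the term $\int u^2\Delta u^2$ integrates by parts to $-4\int u^2\lv\nabla u\rv^2$; and the zeroth-order term passes through unchanged.

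The outcome is a linear combination of $\int\lv\nabla u\rv^2\Delta u^2$, $\int\lv\nabla u\rv^4$, $\int u^2\lv\nabla u\rv^2$, and $\int u^4$ with explicit coefficients. The final step is to use \eqref{eqn:sigma1} to expand
\[ \lv\nabla u\rv^2\sigma_1(u) = -\frac{n-4}{8}\lv\nabla u\rv^2\Delta u^2 - \lv\nabla u\rv^4 + \frac{n}{2}\biggl(\frac{n-4}{4}\biggr)^{2}u^2\lv\nabla u\rv^2, \]
so that the computed integral can be repackaged as $\int\lv\nabla u\rv^2\sigma_1(u)$ plus the remaining $\lv\nabla u\rv^4$, $u^2\lv\nabla u\rv^2$, and $u^4$ contributions with precisely the coefficients stated in the corollary. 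There is no real conceptual obstacle; the corollary is essentially a careful bookkeeping exercise in integration by parts, with the only nontrivial ingredient being the self-adjointness identity that collapses the two second-order terms of \eqref{eqn:sigma2-operator} into a single $\int\lv\nabla u\rv^2\Delta u^2$.
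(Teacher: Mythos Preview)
Your proposal is correct and follows essentially the same route as the paper: specialize Lemma~\ref{lem:sigma2-operator} to the round metric (where $P=\tfrac{1}{2}g_0$, $J=\tfrac{n}{2}$, $T_1=\tfrac{n-1}{2}g_0$, $\sigma_2=\tfrac{n(n-1)}{8}$), integrate each term by parts to reach the intermediate expression $\int\bigl[-\tfrac{1}{2}\lv\nabla u\rv^4-\tfrac{n-4}{8}\lv\nabla u\rv^2\Delta u^2+(n-1)\bigl(\tfrac{n-4}{4}\bigr)^2u^2\lv\nabla u\rv^2+\tfrac{n(n-1)}{8}\bigl(\tfrac{n-4}{4}\bigr)^3u^4\bigr]$, and then substitute~\eqref{eqn:sigma1}. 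The paper states this computation more tersely, but the steps are identical.
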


\begin{remark}
 \label{rk:sigma1}
 Note that if $u>0$ and $g_u=u^{\frac{8}{n-4}}g_0$, then
 \[ \sigma_1(u) = \left(\frac{n-4}{4}\right)^2u^{\frac{2n}{n-4}}\sigma_1^{g_u} . \]
 In particular, when $n\not=4$, $\sigma_1(u)>0$ if and only if $g_u$ has positive scalar curvature. 
\end{remark}

\begin{proof}
 Recall that the Schouten tensor of the round $n$-sphere satisfies $P=\frac{1}{2}g_0$.  Hence $J=\frac{n}{2}$ and $T_1=\frac{n-1}{2}g_0$.  We conclude from Lemma~\ref{lem:sigma2-operator} that
 \begin{multline*}
  \int_{S^n} u\,L_{\sigma_2}(u,u,u)\,\dvol = \int \biggl[ -\frac{1}{2}\lv\nabla u\rv^4 - \frac{n-4}{8}\lv\nabla u\rv^2\Delta u^2\\ + (n-1)\left(\frac{n-4}{4}\right)^2u^2\lv\nabla u\rv^2 + \frac{n(n-1)}{8}\left(\frac{n-4}{4}\right)^3u^4 \biggr] \dvol .
 \end{multline*}
 Writing this in terms of~\eqref{eqn:sigma1} yields the desired conclusion.
\end{proof}

\begin{remark}
 \label{rk:sigma2-functional}
 Corollary~\ref{cor:sigma2-dirichlet-form} and the properties of the operator $L_{\sigma_2}$ yield additional information about the fully nonlinear sharp Sobolev inequality~\eqref{eqn:sigma2_sobolev_functional}.
 
 First, set
 \[ \cmS_{1,4} := \left\{ u\in C^\infty(S^n) \suchthat \sigma_1(u)>0 \right\} . \]
 It follows from~\eqref{eqn:sigma1} that if $\sigma_1(u)>0$, then $u$ is nowhere vanishing.  Since $\sigma_1(-u)=\sigma_1(u)$ and $\mE_{\sigma_2}(-u)=\mE_{\sigma_2}(u)$, there is no loss in restricting to
 \[ \mS_{1,4} = \left\{ u \in \cmS_{1,4} \suchthat u>0 \right\} . \]
 Specifically, the infima of $\mE_{\sigma_2}\colon\cmS_{1,4}\cap\mV_4\to\bR$ and $\mE_{\sigma_2}\colon\mS_{1,4}\cap\mV_4\to\bR$ agree, where $\mV_4$ is given by~\eqref{eqn:mC}.  Note that $\mV_{1,4}$, as defined by~\eqref{eqn:mV4}, equals $\mS_{1,4}\cap\mV_4$.
 
 Second, it follows immediately from Corollary~\ref{cor:sigma2-dirichlet-form} and the continuity of the Sobolev embedding $W^{1,4}(S^n)\hookrightarrow L^{\frac{4n}{n-4}}(S^n)$ that
 \[ \inf \left\{ \mE_{\sigma_2}(u) \suchthat u \in \mV_{1,4} \right\} > 0 \]
 (cf.\ \cite{GeWang2013}).  Smooth positive minimizers of this infimum exist~\cite{GeWang2013,GuanWang2003b}.  Therefore Theorem~\ref{thm:sigma2_classification} identifies the minimizers, recovering~\eqref{eqn:sigma2_sobolev_functional}.
 
 Finally, by conformal covariance, we may rewrite~\eqref{eqn:sigma2_sobolev_functional} on $\bR^n$.  Specifically, Lemma~\ref{lem:sigma2-operator} gives the formula for $L_{\sigma_2}$ on $\bR^n$, whereupon one deduces that
 \[ \int_{\bR^n} \left[ I(u)\lv\nabla u\rv^2 + \frac{1}{2}\lv\nabla u\rv^4 \right] \geq \frac{n(n-1)}{8}\left(\frac{n-4}{4}\right)^3\omega_n^{4/n}\left(\int_{\bR^n} \lv u\rv^{\frac{4n}{n-4}}\right)^{\frac{n-4}{n}} \]
 for all suitably integrable $u\in C^\infty(\bR^n)$ such that $I(u):=-\frac{n-4}{8}\Delta u^2-\lv\nabla u\rv^2>0$, with equality if and only if
 \[ u(x) = a\left(1+\lv x-x_0\rv^2\right)^{-\frac{n-4}{n}} \]
 for some $a\in\bR$ and some $x_0\in\bR^n$.
\end{remark}

We require one last tangential operator.  Define the vector-valued operator $\cD$ by
\begin{equation}
 \label{eqn:thomas-D}
 \cD(\cu) := \cnabla(n+2T-2)\cu - (\cDelta u)T ,
\end{equation}
where $(\cDelta u)T$ denotes the pointwise multiplication of the vector field $T$ by the function $(\cDelta u)$.  It is straightforward to check that $\cD\colon\cmE[w]\to\cmE[w-1]$ is tangential for all $w\in\bR$; indeed, $\cD$ is the ambient form of the tractor-$D$ operator~\cite{CapGover2003}.  The map $\cu\mapsto\lv\cD(\cu)\rv^2$ is therefore tangential.  This operator is closely related to the scalar curvature:

\begin{lem}
 \label{lem:thomas-D}
 Let $(S^n,g_0)$ be the round $n$-sphere and let $(\bR_+^{n+1,1},\cg_0)$ be its ambient space.  For any real $w\not=-\frac{n-2}{2}$, the operator $\cI\colon\cmE[w]\to\cmE[2w-2]$ given by
 \begin{equation}
  \label{eqn:thomas-D-length}
  \cI(\cu) := -\frac{1}{2(n+2w-2)}\lv\cD(\cu)\rv^2 = -\frac{n+2w-2}{2}\lv\cnabla u\rv^2 + wu\cDelta u
 \end{equation}
 is tangential.  Moreover, if $u\in\mE[w]$ is positive and $w\not=0,-\frac{n-2}{2}$, then
 \begin{equation}
  \label{eqn:I-to-scal}
  I(u) = w^2u^{\frac{2(w-1)}{w}} J^{g_u} ,
 \end{equation}
 where $g_u=u^{-2/w}g_0$ and $J=\frac{R}{2(n-1)}$ is a rescaling of the scalar curvature.
\end{lem}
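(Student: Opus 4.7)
The plan is to prove the two assertions in sequence: first the equality defining $\cI$ and its tangentiality, then the scalar curvature formula.

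For the first part, I would use the homogeneity of $\cu$ to simplify $\cD(\cu)$. Since $T\cu = w\cu$, definition~\eqref{eqn:thomas-D} reduces to $\cD(\cu) = (n+2w-2)\cnabla\cu - (\cDelta\cu)T$. Expanding the ambient inner product and using $\cg_0(\cnabla\cu, T) = T\cu = w\cu$ together with $\lv T\rv^2 = Q$ gives
\[
 \lv\cD(\cu)\rv^2 = (n+2w-2)^2\lv\cnabla\cu\rv^2 - 2w(n+2w-2)\cu\,\cDelta\cu + (\cDelta\cu)^2 Q.
\]
Since $Q = 0$ on $\mN$, division by $-2(n+2w-2)$---allowed because $w\neq -(n-2)/2$---produces the second expression in~\eqref{eqn:thomas-D-length}. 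Tangentiality of $\cI$ then follows immediately from the already-noted tangentiality of $\cu\mapsto\lv\cD(\cu)\rv^2$.

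For the second part, I would first compute $I_g(1) = w^2 J^g$ by a direct ambient calculation, then invoke conformal covariance to recover the general formula. Work in Fefferman--Graham coordinates $(t, x, \rho)$ in which $\cg_0 = 2\rho\,dt^2 + 2t\,dt\,d\rho + t^2 g_\rho$ with $g_\rho = g + 2\rho P + O(\rho^2)$, so that the section $(S^n, g)$ sits at $\{t=1,\rho=0\}$. Extend $1\in C^\infty(S^n)$ as $\cu = t^w \in \cmE[w]$. Since $\cg_0^{tt}=0$ and $\cu$ depends only on $t$, one finds $\lv\cnabla\cu\rv^2 = 0$. For $\cDelta\cu$ the only surviving contribution in the divergence-form expression comes from the $\partial_\rho$-term: $\cg_0^{\rho t}\partial_t(t^w) = w t^{w-2}$, and the derivative falls on the volume density $\sqrt{\det g_\rho}$. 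From $\partial_\rho g_\rho\rv_{\rho=0} = 2P$ one has $\partial_\rho \log\sqrt{\det g_\rho}\rv_{\rho=0} = J$, so $\cDelta(t^w)\rv_{t=1,\rho=0} = w J$. Substituting into~\eqref{eqn:thomas-D-length} yields $I_g(1) = w^2 J^g$.

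The general formula then follows from conformal covariance. Since $\cI\colon\cmE[w]\to\cmE[2w-2]$ is tangential, the associated operators satisfy $I_{e^{2\Upsilon}g_0}(v) = e^{(2w-2)\Upsilon} I_{g_0}(e^{-w\Upsilon}v)$. Setting $\Upsilon = -\tfrac{1}{w}\log u$, so that $e^{2\Upsilon}g_0 = u^{-2/w}g_0 = g_u$, and $v = 1$, one obtains $I_{g_u}(1) = u^{(2-2w)/w} I_{g_0}(u)$; combined with $I_{g_u}(1) = w^2 J^{g_u}$, this gives $I_{g_0}(u) = w^2 u^{2(w-1)/w} J^{g_u}$, as claimed. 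The main obstacle is identifying the single nonvanishing contribution to $\cDelta(t^w)$ in the Fefferman--Graham coordinates: the zero pattern of the inverse ambient metric and the role of $\partial_\rho\log\sqrt{\det g_\rho}$ must be tracked carefully, with the coefficient $J$ emerging from the leading expansion $g_\rho = g + 2\rho P + O(\rho^2)$. Everything else---the algebra in part one and the conformal-covariance argument in the final step---is routine.
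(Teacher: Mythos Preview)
Your proposal is correct and follows the same overall strategy as the paper: verify~\eqref{eqn:thomas-D-length} from the definition of $\cD$ and the identities $\lp\cnabla\cu,T\rp=w\cu$, $\lv T\rv^2=Q$; deduce tangentiality from that of $\lv\cD(\cdot)\rv^2$; establish $I_g(1)=w^2J^g$; and then conclude by conformal covariance. The only substantive difference is in the third step. The paper obtains the intrinsic formula $I(u)=-\tfrac{n+2w-2}{2}\lv\nabla u\rv^2+wu(\Delta+wJ)u$ by appealing to the tractor-$D$ literature, from which $I(1)=w^2J$ is immediate. You instead compute $\cDelta(t^w)$ directly in Fefferman--Graham coordinates, reading off the coefficient $J$ from $\partial_\rho\log\sqrt{\det g_\rho}\rv_{\rho=0}$. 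Your route is more self-contained and avoids the external citation, at the cost of the coordinate computation; the paper's route is shorter but imports the tractor formalism. Both land on the same identity $I(1)=w^2J$ and finish identically via the covariance law $I_{e^{2\Upsilon}g_0}(v)=e^{(2w-2)\Upsilon}I_{g_0}(e^{-w\Upsilon}v)$.
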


\begin{proof}
 Equation~\eqref{eqn:thomas-D-length} follows immediately from the definition of $\cD$, while the fact that $\cI$ is tangential follows immediately from the fact that $\cD$ is tangential.  Since $\cD$ induces the tractor-$D$ operator~\cite{CapGover2003}, it holds that
 \begin{equation}
  \label{eqn:eval-I}
  I(u) = -\frac{n+2w-2}{2}\lv\nabla u\rv^2 + wu(\Delta + wJ)u
 \end{equation}
 (cf.\ \cite{Bailey1994}).  In particular, $I(1)=w^2J^g$.  Equation~\eqref{eqn:I-to-scal} then follows from conformal covariance.
\end{proof}

\begin{remark}
 Equation~\eqref{eqn:eval-I} implies that $I(u)=\sigma_1(u)$ when computed with respect to $g_0$ and that $I(u)$ is as in Remark~\ref{rk:sigma2-functional} when computed with respect to $dx^2$.  It is for these reasons we introduced the normalization factor in~\eqref{eqn:thomas-D-length}.
\end{remark}

\section{Commutator identities}
\label{sec:commutator}

In this section we prove some commutator identities involving the GJMS and $\sigma_2$-operators.  We prove two families of identities.  First, we deduce a conformally covariant commutator identity for $[L_{2k},x]$ on the round $n$-sphere which is stated in terms of $L_{2k-2}$ and a conformal Killing field, where $x$ is any first spherical harmonic.  Second, we deduce conformally covariant commutator identities for $\sum x^i[L,x^i]$ for both the GJMS and $\sigma_2$-operators, where $x^i$ are a basis of first spherical harmonics.  The former identity is interesting and stronger than the latter identity, but only the latter identity is necessary to execute the Frank--Lieb argument.

Let $(S^n,g_0)$ be the round $n$-sphere and let $(\bR_+^{n+1,1},\cg_0)$ be its ambient space as in Section~\ref{sec:ambient}.  For any $0\leq i\leq n$, consider the vector field
\begin{equation}
 \label{eqn:cX}
 \cX(i) := \ctau\cnabla\cx^i - \cx^i\cnabla\ctau .
\end{equation}
It is easily checked that $\cmL_{\cX(i)}\colon\cmE[w]\to\cmE[w]$ is tangential.  Note that the restriction of $\{\cX(i)\}_{i=0}^n$ to $(S^n,g_0)$ is a basis for its space of essential conformal Killing fields.  The associated operator is the conformally invariant operator
\begin{equation}
 \label{eqn:first-order-operator}
 \mL_X - \frac{w}{n}\delta X \colon \mE[w] \to \mE[w] ,
\end{equation}
where $X=X(i)$ and $\delta X$ is the divergence of $X$, regarded as a multiplication operator.  Specializing further to $(S^n,g_0)$, this is the operator
\begin{equation}
 \label{eqn:first-order-conformal-operator}
 \mL_{\nabla x^i} + wx^i \colon \mE[w] \to \mE[w] .
\end{equation}
Our first family of commutator identities will be expressed in terms of $\cmL_{\cX(i)}$.


Our second family of commutator identities will take advantage of a nice formula for the sums $\sum \cx^i\cX(i)$.

\begin{lem}
 \label{lem:sum_cX}
 Let $(\bR_+^{n+1,1},\cg_0)$ be Minkowski upper half space.  Then
 \[ \sum_{i=0}^n \cx^i\cX(i) = \ctau T + Q\partial_{\ctau} . \]
\end{lem}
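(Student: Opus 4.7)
The claim is a direct pointwise computation in Minkowski coordinates; nothing deeper is needed, so my plan is essentially an orderly bookkeeping exercise, with the Minkowski signature of $\cg_0$ being the only subtle point.

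First I would compute the gradients of the coordinate functions. Since $\cg_0 = -d\ctau\otimes d\ctau + \sum_i d\cx^i\otimes d\cx^i$, the inverse metric is $\cg_0^{-1} = -\partial_\ctau\otimes\partial_\ctau + \sum_i \partial_i\otimes\partial_i$, so
\[
  \cnabla \cx^i = \partial_i, \qquad \cnabla \ctau = -\partial_\ctau.
\]
Substituting into the definition~\eqref{eqn:cX} of $\cX(i)$ yields
\[
  \cX(i) = \ctau\,\partial_i + \cx^i\,\partial_\ctau.
\]

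Next I would form the sum $\sum_{i=0}^n \cx^i\cX(i)$ and split into spatial and $\ctau$-components:
\[
  \sum_{i=0}^n \cx^i\cX(i) = \ctau\sum_{i=0}^n \cx^i\,\partial_i + \Bigl(\sum_{i=0}^n (\cx^i)^2\Bigr)\partial_\ctau.
\]
Now I would rewrite $\sum_i(\cx^i)^2 = |\cx|^2 = Q + \ctau^2$ using the definition of $Q$, and recognize that the Euler field on $\bR_+^{n+1,1}$ satisfies $\ctau T = \ctau\sum_i \cx^i\,\partial_i + \ctau^2\,\partial_\ctau$ (the $\ctau\partial_\ctau$ term being needed so that $T$ detects homogeneity in all coordinates). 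Substituting gives
\[
  \sum_{i=0}^n \cx^i\cX(i) = \ctau\sum_{i=0}^n \cx^i\,\partial_i + (Q+\ctau^2)\partial_\ctau = \ctau T + Q\,\partial_\ctau,
\]
which is the desired identity.

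There is no real obstacle: the one place where a careless computation could go wrong is the sign of $\cnabla\ctau$, which must be $-\partial_\ctau$ because $\ctau$ is the timelike direction; with that sign the cross-terms line up to produce the two summands $\ctau T$ and $Q\,\partial_\ctau$ separately. I would present this as a short one-paragraph proof.
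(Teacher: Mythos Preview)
Your proposal is correct and follows the same approach as the paper: both compute $\cX(i)=\ctau\,\partial_i+\cx^i\,\partial_\ctau$ from the Lorentzian signature and then expand the sum using the definitions of $Q$ and $T$. Your version is simply more explicit, including the useful remark that $T$ must contain the $\ctau\,\partial_\ctau$ summand (so that $T\cf=w\cf$ on $\cmE[w]$), which the paper's terse notation $T:=\cx^i\partial_i$ leaves implicit.
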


\begin{proof}
 Note that, because of the signature of $\cg_0$,
 \[ \cX(i) = \ctau\partial_{\cx^i} + \cx^i\partial_{\ctau} . \]
 The conclusion readily follows from this and the definitions of $Q$ and $T$.
\end{proof}

We now turn to the desired commutator identities involving the GJMS operators.  We begin with a simple observation about commutators of the ambient Laplacian.

\begin{lem}
 \label{lem:commute}
 Let $\cx$ be such that $\cnabla^2\cx=0$ on Minkowski space $(\bR_+^{n+1,1},\cg_0)$.  Let $k\in\bN$ and $u\in C^\infty(\bR_+^{n+1,1})$.  Then
 \[ \bigl[(-\cDelta)^k,\cx\bigr] = -2k\cmL_{\cX}\circ(-\cDelta)^{k-1} . \]
 where $\cX:=\cnabla\cx$ is the gradient of $\cx$ with respect to $\cg_0$.
\end{lem}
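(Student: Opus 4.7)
The plan is to proceed by induction on $k$, using the fact that $\cX=\cnabla\cx$ is parallel on the flat Minkowski space.

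\textbf{Base case.} For $k=1$, I would apply the product rule
\[ \cDelta(\cx u) = (\cDelta\cx)u + 2\lp\cnabla\cx,\cnabla u\rp + \cx\cDelta u. \]
The hypothesis $\cnabla^2\cx=0$ forces $\cDelta\cx=\tr_{\cg_0}\cnabla^2\cx=0$, so the first term vanishes and $\lp\cnabla\cx,\cnabla u\rp = \cX(u) = \cmL_{\cX}u$. Rearranging yields
\[ [(-\cDelta),\cx]u = -2\cmL_{\cX}u, \]
which matches the claim for $k=1$.

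\textbf{Inductive step.} Assuming the formula at level $k-1$, I would write
\[ [(-\cDelta)^k,\cx] = (-\cDelta)\bigl[(-\cDelta)^{k-1},\cx\bigr] + [(-\cDelta),\cx](-\cDelta)^{k-1} \]
and substitute to get
\[ [(-\cDelta)^k,\cx] = -2(k-1)(-\cDelta)\,\cmL_{\cX}(-\cDelta)^{k-2} - 2\,\cmL_{\cX}(-\cDelta)^{k-1}. \]
To collapse this to $-2k\,\cmL_{\cX}(-\cDelta)^{k-1}$, the only thing I need is that $\cDelta$ commutes with $\cmL_{\cX}$.

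\textbf{The commutation $[\cDelta,\cmL_{\cX}]=0$.} This is the one ingredient beyond pure bookkeeping, but on a flat space it is essentially automatic. The condition $\cnabla^2\cx=0$ says that $\cX=\cnabla\cx$ is a parallel vector field, and on Minkowski space this forces $\cX$ to have constant components in the standard Cartesian coordinates $(\cx^0,\dotsc,\cx^n,\ctau)$. Writing $\cX = \sum a^\alpha\partial_\alpha$ with $a^\alpha$ constant, we see that $\cmL_{\cX}$ is just the constant-coefficient first-order operator $a^\alpha\partial_\alpha$, and the constant-coefficient Laplacian $\cDelta = \eta^{\alpha\beta}\partial_\alpha\partial_\beta$ obviously commutes with it. (Equivalently, one may note that a parallel field is Killing, and the Lie derivative along a Killing field always commutes with the Laplace--Beltrami operator.)

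With this commutation in hand, the inductive step gives $-2(k-1)\cmL_{\cX}(-\cDelta)^{k-1} - 2\cmL_{\cX}(-\cDelta)^{k-1} = -2k\,\cmL_{\cX}(-\cDelta)^{k-1}$, completing the induction. There is no real obstacle; the only subtle point, the commutativity of $\cDelta$ and $\cmL_{\cX}$, is trivialized by flatness of the ambient space.
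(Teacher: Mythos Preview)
Your proof is correct and follows the same strategy as the paper: the paper records the two identities $[-\cDelta,\cx]=-2\cmL_{\cX}$ and $[-\cDelta,\cmL_{\cX}]=0$ (the latter justified via flatness/Ricci-flatness) and then says the result ``readily follows,'' which is exactly the induction you spell out. Your argument is just a more detailed version of theirs.
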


\begin{proof}
 Since $\cnabla^2\cx=0$ and $\cg_0$ is Ricci flat, we have that
 \[ \bigl[-\cDelta,\cx\bigr] = -2\mL_{\cX}, \qquad \bigl[-\cDelta,\mL_{\cX}\bigr] = 0 . \]
 The conclusion readily follows.
\end{proof}

Lemma~\ref{lem:commute} immediately yields a formula for $\ctau\cDelta\cx^i-\cx^i\cDelta \ctau$.  Expressing this in terms of tangential operators yields our first commutator formula involving the GJMS operators.

\begin{prop}
 \label{prop:commutator}
 Let $(S^n,g_0)$ be the round $n$-sphere and let $(\bR_+^{n+1,1},\cg_0)$ be its ambient space.  Given integers $k\geq1$ and $0\leq i\leq n$, it holds that
 \begin{equation}
  \label{eqn:tangential_commutator}
  \ctau(-\cDelta)^k\cx^i - \cx^i(-\cDelta)^k\ctau = -2k\cmL_{\cX(i)}\circ(-\cDelta)^{k-1} .
 \end{equation}
 Moreover, all summands in~\eqref{eqn:tangential_commutator} are tangential on $\cmE\bigl[-\frac{n-2k+2}{2}\bigr]$.
\end{prop}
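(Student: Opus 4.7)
The plan is to obtain \eqref{eqn:tangential_commutator} by applying Lemma~\ref{lem:commute} twice (once to each of the affine functions $\cx^i$ and $\ctau$), and then to verify tangentiality by tracking the homogeneity weights along each composition.

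First, observe that $\cnabla^2\cx^i = 0$ and $\cnabla^2\ctau = 0$, since $\cx^i$ and $\ctau$ are linear on the flat Minkowski space. With respect to $\cg_0$ the gradients are $\cnabla\cx^i = \partial_{\cx^i}$ and $\cnabla\ctau = -\partial_{\ctau}$, so that $\cX(i) = \ctau\cnabla\cx^i - \cx^i\cnabla\ctau$. Lemma~\ref{lem:commute} then gives the operator identities
\[ [(-\cDelta)^k,\cx^i] = -2k\,\cmL_{\cnabla\cx^i}\circ(-\cDelta)^{k-1}, \qquad [(-\cDelta)^k,\ctau] = -2k\,\cmL_{\cnabla\ctau}\circ(-\cDelta)^{k-1} . \]
Multiplying the first identity by $\ctau$ on the left and the second by $\cx^i$ on the left, subtracting, and using that the multiplication operators $\ctau$ and $\cx^i$ commute, one finds
\[ \ctau(-\cDelta)^k\cx^i - \cx^i(-\cDelta)^k\ctau = -2k\left(\ctau\cmL_{\cnabla\cx^i} - \cx^i\cmL_{\cnabla\ctau}\right)\circ(-\cDelta)^{k-1} . \]
Since the Lie derivative on scalar functions is $C^\infty$-linear in its vector field argument, $\ctau\cmL_{\cnabla\cx^i} - \cx^i\cmL_{\cnabla\ctau} = \cmL_{\ctau\cnabla\cx^i - \cx^i\cnabla\ctau} = \cmL_{\cX(i)}$, yielding \eqref{eqn:tangential_commutator}.

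It remains to verify that all three summands are tangential on $\cmE\bigl[-\tfrac{n-2k+2}{2}\bigr]$. Since $\cx^i$ has weight $1$, multiplication by $\cx^i$ sends $\cmE\bigl[-\tfrac{n-2k+2}{2}\bigr]$ into $\cmE\bigl[-\tfrac{n-2k}{2}\bigr]$, which is precisely the weight on which $(-\cDelta)^k$ is tangential; multiplication by $\ctau$ afterwards is purely algebraic and preserves tangentiality. The second summand $\cx^i(-\cDelta)^k\ctau$ is treated identically by symmetry of the argument in $\cx^i$ and $\ctau$. Finally, $(-\cDelta)^{k-1}$ is tangential on $\cmE\bigl[-\tfrac{n-2(k-1)}{2}\bigr]=\cmE\bigl[-\tfrac{n-2k+2}{2}\bigr]$, and $\cmL_{\cX(i)}$ is tangential on every weight, so their composition is tangential on $\cmE\bigl[-\tfrac{n-2k+2}{2}\bigr]$.

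The main technical point is simply the bookkeeping of weights required to place each operator on a homogeneity level where the known tangentiality of $(-\cDelta)^k$ applies; the algebraic identity itself is essentially immediate from Lemma~\ref{lem:commute} once one recognises $\cX(i)$ as the combination $\ctau\cnabla\cx^i - \cx^i\cnabla\ctau$.
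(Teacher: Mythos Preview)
Your proof is correct and follows the same approach as the paper: deduce \eqref{eqn:tangential_commutator} from Lemma~\ref{lem:commute} applied to the linear functions $\cx^i$ and $\ctau$, then verify tangentiality by checking that the weights line up with the known tangentiality of $(-\cDelta)^k$, $(-\cDelta)^{k-1}$, and $\cmL_{\cX(i)}$. The only difference is that you spell out explicitly the subtraction step and the $C^\infty$-linearity of the Lie derivative in the vector field argument, which the paper compresses into the phrase ``follows immediately from Lemma~\ref{lem:commute}.''
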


\begin{proof}
 Equation~\ref{eqn:tangential_commutator} follows immediately from Lemma~\ref{lem:commute}.  Since $\cx^i,\ctau\in\cmE[1]$, each of $(-\cDelta)^k\cx^i$, $(-\cDelta)^k\ctau$, and $(-\cDelta)^{k-1}$ are tangential on $\cmE\bigl[-\frac{n-2k+2}{2}\bigr]$.  The final conclusion follows from the fact that $\cmL_{\cX(i)}$ is tangential on $\cmE[w]$ for any $w\in\bR$.
\end{proof}

Though unnecessary for the rest of this note, the following corollary is expected to find more general applications.

\begin{cor}
 \label{cor:gjms_commute}
 Let $(S^n,g_0)$ be the round $n$-sphere and let $x$ be a first spherical harmonic.  Given $k\in\bN$, it holds that
 \[ [L_{2k},x] = k\bigl((n+2k-2)x-2\mL_X\bigr)L_{2k-2}, \]
 where $X=\nabla x$ is the gradient of $x$ with respect to $g_0$.
\end{cor}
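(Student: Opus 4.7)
The plan is to derive this corollary by descending the ambient commutator identity of Proposition~\ref{prop:commutator} to the sphere via the standard $g_0$-trivialization. Since both sides are linear in the first spherical harmonic $x$, it suffices to prove the identity when $x = x^i$, the restriction to $S^n$ of an ambient coordinate function; the general case then follows because every first spherical harmonic on $S^n$ is a linear combination of the $x^i$.

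Fix $u \in C^\infty(S^n)$ and let $\cu \in \cmE[-\frac{n-2k+2}{2}]$ be its standard lift under the $g_0$-trivialization. Apply the identity of Proposition~\ref{prop:commutator} to $\cu$ and then restrict to the section $\ctau = 1$. Three ingredients evaluate the two sides. First, multiplication by $\ctau$ shifts the density weight by one without changing the $g_0$-trivialized function (since $\ctau\cdot\ctau^w u(\cx/\ctau) = \ctau^{w+1} u(\cx/\ctau)$), whereas multiplication by $\cx^i$ shifts the weight by one and descends to multiplication by the spherical harmonic $x^i$. Second, $(-\cDelta)^j$ on $\cmE[-\frac{n-2j}{2}]$ descends to the GJMS operator $L_{2j}$ by definition, so the left-hand side of the descended identity is $L_{2k}(x^i u) - x^i L_{2k}(u)$. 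Third, by~\eqref{eqn:first-order-conformal-operator}, the tangential operator $\cmL_{\cX(i)}$ on $\cmE[w]$ descends to $\mL_{\nabla x^i} + w x^i$ on $C^\infty(S^n)$.

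The key bookkeeping step is to identify the correct weight $w$ in the descent of $\cmL_{\cX(i)}$: since the intermediate density $(-\cDelta)^{k-1}\cu$ lies in $\cmE[-\frac{n+2k-2}{2}]$, we must take $w = -\frac{n+2k-2}{2}$, so that the descent of $\cmL_{\cX(i)}$ at this stage gives $\mL_{\nabla x^i} - \frac{n+2k-2}{2}x^i$ acting on $L_{2k-2}(u)$. Multiplying by the overall factor of $-2k$ from Proposition~\ref{prop:commutator} then yields
\[
L_{2k}(x^i u) - x^i L_{2k}(u) = k\bigl((n+2k-2) x^i - 2\mL_{\nabla x^i}\bigr) L_{2k-2}(u),
\]
which is the claim for $x = x^i$. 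The argument is essentially bookkeeping once Proposition~\ref{prop:commutator} is in hand; the only real subtlety is tracking the weight $w$ correctly, since it is precisely what produces the coefficient $(n+2k-2)$ in the final formula.
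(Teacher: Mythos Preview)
Your proof is correct and follows essentially the same approach as the paper's own proof: descend the tangential identity of Proposition~\ref{prop:commutator} to $(S^n,g_0)$ using $\ctau=1$ and the formula~\eqref{eqn:first-order-conformal-operator} for the descent of $\cmL_{\cX(i)}$. The paper's proof is two sentences; you have simply unpacked the weight bookkeeping (in particular, tracking that $(-\cDelta)^{k-1}\cu$ lands in $\cmE\bigl[-\frac{n+2k-2}{2}\bigr]$ so that the correct $w$ appears in~\eqref{eqn:first-order-conformal-operator}), which is exactly what the paper leaves implicit.
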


\begin{proof}
 Restricting to $(S^n,g_0)$ implies that $\ctau=1$ and that $\cmL_{\cX(i)}$ is given by~\eqref{eqn:first-order-conformal-operator}.  Combining this observation with~\eqref{eqn:tangential_commutator} yields the desired result.
\end{proof}

We can now prove the commutator identity involving the GJMS operators needed to execute the Frank--Lieb argument:

\begin{proof}[Proof of Theorem~\ref{thm:gjms_commutator}]
 Combining Lemma~\ref{lem:sum_cX} and Proposition~\ref{prop:commutator} yields the operator identity
 \[ \sum_{i=0}^n \ctau\cx^i(-\cDelta)^k\cx^i \equiv \tau^2(-\cDelta)^k\ctau - 2k\ctau \mL_T\circ(-\cDelta)^{k-1} \mod Q . \]
 Restricting this to $\cmE\bigl[-\frac{n-2k+2}{2}\bigr]$ yields
 \[ \sum_{i=0}^n \ctau\cx^i(-\cDelta)^k\cx^i \equiv \tau^2(-\cDelta)^k\ctau + k(n+2k-2)(-\cDelta)^{k-1} \mod Q . \]
 Since all summands are tangential on $\cmE\bigl[-\frac{n-2k+2}{2}\bigr]$, we see that this descends to a conformally covariant operator identity on the conformal $n$-sphere.  Specializing to $(S^n,g_0)$, where $\ctau=1$, yields the final conclusion.
\end{proof}

The same techniques as used above also establish the commutator identity involving the operator $L_{\sigma_2}$ needed to execute the Frank--Lieb argument.

\begin{proof}[Proof of Theorem~\ref{thm:sigma2_commutator}]
 Let $\cB$ be as in~\eqref{eqn:cB}, let $\cu\in\cmE\bigl[-\frac{n-4}{4}\bigr]$, and let $\cv\in\cmE\bigl[-\frac{n}{4}\bigr]$.  Let $\cx$ be such that $\cnabla^2\cx=0$.  Then
 \begin{align*}
  \cB(\cu,\cu,\cx^i\cv) & = \cx^i\cB(\cu,\cu,\cv) - \lp\cnabla v,\cnabla x\rp\,\cI(\cu) - \frac{1}{2}\cv\lp\cnabla\cx^i,\cnabla\cI(\cu)\rp, \\
  \cB(\cx^i\cv,\cu,\cu) & = \cx^i\cB(\cv,\cu,\cu) + \frac{1}{4}\cv\lp\cnabla\cx^i,\cnabla\cI(\cu)\rp - \frac{1}{2}\lp\cnabla\cv,\cnabla\cx\rp\,\cI(\cu) \\
   & \quad - \frac{n-4}{4}\cu\cnabla^2\cu(\cnabla\cv,\cnabla\cx) - \frac{1}{2}\lv\cnabla\cu\rv^2\lp\cnabla\cv,\cnabla\cx\rp + \frac{n}{4}\lp\cnabla\cu,\cnabla\cx\rp\lp\cnabla\cu,\cnabla\cv\rp \\
   & \quad + \frac{n}{8}\cv\lp\cnabla\cx,\cnabla\lv\cnabla\cu\rv^2\rp + \frac{n-2}{4}\cv\lp\cnabla\cu,\cnabla\cx\rp\,\cDelta\cu ,
 \end{align*}
 where $\cI\colon\cmE\bigl[-\frac{n-4}{4}\bigr]\to\cmE\bigl[-\frac{n}{2}\bigr]$ is given by~\eqref{eqn:thomas-D-length}.  We conclude that
 \begin{align*}
  \MoveEqLeft[2] \ctau\cL_{\sigma_2}(\cx^i\cv,\cu,\cu) - \cx^i\cL_{\sigma_2}(\ctau\cv,\cu,\cu) \\
  & = -\frac{2}{3}\lp\cX(i),\cnabla\cv\rp\,\cI(\cu) - \frac{n-4}{6}\cu\cnabla^2\cu(\cX(i),\cnabla\cv) - \frac{1}{3}\lv\cnabla\cu\rv^2\lp\cX(i),\cnabla\cv\rp \\
   & \quad + \frac{n}{6}\lp\cX(i),\cnabla\cu\rp\lp\cnabla\cu,\cnabla\cv\rp + \frac{n}{12}\cv\lp\cX(i),\cnabla\lv\cnabla\cu\rv^2\rp + \frac{n-2}{6}\cv\lp\cX(i),\cnabla\cu\rp\,\cDelta\cu \\
 \end{align*}
 for all integers $0\leq i\leq n$.  Combining this with Lemma~\ref{lem:sum_cX} and the homogeneity assumptions on $\cu$ and $\cv$ yields
 \begin{equation}
  \label{eqn:ambient-sigma2-sum-commutator}
  \sum_{i=0}^n \cx^i\left[ \ctau\cL_{\sigma_2}(\cx^i\cv,\cu,\cu) - \cx^i\cL_{\sigma_2}(\ctau\cv,\cu,\cu)\right] \equiv \frac{n-1}{3}\ctau\cv\cI(\cu) \mod Q .
 \end{equation}
 Note that~\eqref{eqn:ambient-sigma2-sum-commutator} is an equivalence of tangential operators.  Evaluating~\eqref{eqn:ambient-sigma2-sum-commutator} at $(S^n,g_0)$ and using~\eqref{eqn:eval-I} yields the desired conclusion.
\end{proof}

\section{The Frank--Lieb argument}
\label{sec:hersch}

We begin by explaining how local minimizers of a conformally invariant functional give rise to spectral estimates.  Let $L$ be a formally self-adjoint conformally covariant $j$-differential operator of total order $2k$ on $S^n$, $n>2k$.  That is, each metric $g$ on $S^n$ determines an operator $L^g\colon\bigl(C^\infty(S^n)\bigr)^j\to C^\infty(S^n)$ that is multilinear over $\bR$, acts as a differential operator on each of its arguments, satisfies
\[ \phi^\ast\left(L^g(u_1,\dotsc,u_j)\right) = L^{\phi^\ast g}(\phi^\ast u_1,\dotsc,\phi^\ast u_j) \]
for each diffeomorphism $\phi$ of $S^n$, is such that
\[ (u_0,\dotsc,u_j) \mapsto \int_{S^n} u_0\,L^g(u_1,\dotsc,u_j)\,\dvol_g \]
is symmetric in $(u_0,\dotsc,u_j)\in\bigl(C^\infty(S^n)\bigr)^{j+1}$, and satisfies
\[ L^{g_w}(u_1,\dotsc,u_j) = w^{-\frac{jn+2k}{n-2k}}L^g\left(wu_1,\dotsc,wu_j\right) \]
for all positive functions $w\in C^\infty(S^n)$ and all functions $u_1,\dotsc,u_j\in C^\infty(S^n)$, where $g_w=w^{\frac{2(j+1)}{n-2k}}g$.  Note that the GJMS operators $L_{2k}$ are formally self-adjoint conformally covariant $1$-differential operators of total order $2k$ and the operator $L_{\sigma_2}$ is a formally self-adjoint conformally covariant $3$-differential operator of total order $4$.  See~\cite{CaseLinYuan2018b} for other constructions of such operators.

Let $L$ be as in the previous paragraph and define the \emph{associated Dirichlet energy} $\mE_L^g\colon C^\infty(S^n)\to\bR$ by
\[ \mE_L^g(u) := \int_{S^n} u\,L^g(u,\dotsc,u)\,\dvol_g . \]
The fact that $L$ is formally self-adjoint implies that if $u_t$ is a one-parameter family of functions with $u_0=u$, then
\begin{align}
 \label{eqn:first_variation} \left.\frac{d}{dt}\right|_{t=0}\mE_L^g(u_t) & = (j+1)\int_{S^n} \dot u\,L^g(u,\dotsc,u)\,\dvol_g, \\
 \label{eqn:second_variation} \left.\frac{d^2}{dt^2}\right|_{t=0}\mE_L^g(u_t) & = j(j+1)\int_{S^n} \dot u\,L^g(\dot u,u,\dotsc,u)\,\dvol_g \\
  \notag & \qquad + (j+1)\int_{S^n} \ddot u\,L^g(u,\dotsc,u)\,\dvol_g
\end{align}
for $\dot u:=\frac{\partial}{\partial t}\bigr|_{t=0}u_t$ and $\ddot u:=\frac{\partial^2}{\partial t^2}\bigr|_{t=0}u_t$, while the fact that $L$ is conformally covariant implies that
\[ \mE_L^{g_w}(u) = \mE_L^g(wu) \]
for all $w\in C^\infty(S^n)$ positive and all $u\in C^\infty(S^n)$, where $g_w=w^{\frac{2(j+1)}{n-2k}}g$.  Define
\[ \mV_{2k}^g := \left\{ u\in C^\infty(S^n) \suchthat \int_{S^n} \lv u\rv^{\frac{n(j+1)}{n-2k}}\,\dvol_g = \omega_n \right\} \]
and note that $u\in\mV_{2k}^{g_w}$ if and only if $wu\in\mV_{2k}^g$.  Thus the problem of finding critical points of $\mE^g\colon\mV_{2k}^g\to\bR$ is conformally invariant.

The first steps in the Frank--Lieb argument are to obtain a spectral estimate from the assumption that $u$ is a local positive minimizer of $\mE_L^g\colon\mV_{2k}^g\to\bR$ and apply the estimate specifically to the first spherical harmonics.  The result, under general assumptions, is as follows:

\begin{prop}
 \label{prop:stable}
 Let $L$ be a formally self-adjoint $j$-differential operator of total order $2k$ on $(S^n,g_0)$, $n>2k$, and suppose that $u$ is a positive local minimizer of $\mE_L^{g_0}\colon\mV_{2k}^{g_0}\to\bR$.  Suppose additionally that
 \begin{equation}
  \label{eqn:balanced}
  \int_{S^n} x^iu^{\frac{n(j+1)}{n-2k}}\,\dvol_{g_0} = 0
 \end{equation}
 for all integers $0\leq i\leq n$, where $x^0,\dotsc,x^n$ are coordinates on $\bR^{n+1}$ and we regard $S^n\subset\bR^{n+1}$ as the unit sphere.  Then
 \begin{equation}
  \label{eqn:stability_commutator}
  \sum_{i=0}^n \int_{S^n} x^iu\,[L^{g_0},x^i](u,\dotsc,u)\,\dvol_{g_0} \geq \frac{2(j+1)k}{j(n-2k)}\mE_L^{g_0}(u) ,
 \end{equation}
 where
 \[ [L^{g_0},x^i](u) := L^{g_0}(x^iu,u,\dotsc,u) - x^iL^{g_0}(u,u,\dotsc,u) . \]
\end{prop}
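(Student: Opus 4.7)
The plan is to derive~\eqref{eqn:stability_commutator} from the second variation of $\mE_L^{g_0}$ at the local minimizer $u$, using the balanced condition~\eqref{eqn:balanced} to promote $v = x^iu$ into admissible test functions.

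Writing $p := \frac{n(j+1)}{n-2k}$, so that $\mV_{2k}^{g_0}$ is cut out by $\int |u|^p\,\dvol_{g_0}=\omega_n$, I would first combine~\eqref{eqn:first_variation} with the Lagrange multiplier rule to obtain the Euler--Lagrange equation $L^{g_0}(u,\dotsc,u) = \lambda u^{p-1}$, and identify $\lambda\omega_n = \mE_L^{g_0}(u)$ by pairing against $u$. Then, for any $v \in C^\infty(S^n)$ with $\int u^{p-1}v\,\dvol_{g_0}=0$, one realizes $v = \dot u_0$ for some admissible variation $u_t \in \mV_{2k}^{g_0}$ (e.g.\ by normalizing $u+tv$); differentiating the constraint $\int u_t^p\,\dvol_{g_0}=\omega_n$ twice yields the key identity
\[
\int_{S^n} u^{p-1}\ddot u_0\,\dvol_{g_0} = -(p-1)\int_{S^n} u^{p-2}v^2\,\dvol_{g_0}.
\]
Substituting this into~\eqref{eqn:second_variation}, using the Euler--Lagrange equation on the $\ddot u_0$-term, and invoking $\ddot\mE_L^{g_0}(u_0)\geq 0$ produces the spectral inequality
\[
j\int_{S^n} v\,L^{g_0}(v,u,\dotsc,u)\,\dvol_{g_0} \geq \frac{(p-1)\mE_L^{g_0}(u)}{\omega_n}\int_{S^n} u^{p-2}v^2\,\dvol_{g_0}
\]
valid for every such $v$.

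The balanced condition~\eqref{eqn:balanced} is precisely the statement that $v = x^iu$ is admissible in this spectral inequality for each $i=0,\dotsc,n$. Plugging these test functions in, summing over $i$, and using $\sum_{i=0}^n (x^i)^2 \equiv 1$ on $S^n$ gives
\[
j\sum_{i=0}^n \int_{S^n} x^iu\,L^{g_0}(x^iu,u,\dotsc,u)\,\dvol_{g_0} \geq (p-1)\mE_L^{g_0}(u).
\]
Expanding $L^{g_0}(x^iu,u,\dotsc,u) = x^i L^{g_0}(u,\dotsc,u) + [L^{g_0},x^i](u,\dotsc,u)$, the diagonal sum reduces by $\sum(x^i)^2=1$ to $j\,\mE_L^{g_0}(u)$, and transposing this to the right-hand side together with the algebraic identity
\[
p-1-j = \frac{n(j+1)}{n-2k} - (j+1) = \frac{2k(j+1)}{n-2k}
\]
yields~\eqref{eqn:stability_commutator}.

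The delicate step will be the second-variation computation: the nonlinear nature of the constraint forces a nonzero contribution from $\ddot u_0$ which must be carefully combined with the Lagrange multiplier so that the sharp constant $\frac{2(j+1)k}{j(n-2k)}$ emerges. Once the spectral inequality is correctly normalized, the substitution $v=x^iu$ enabled by~\eqref{eqn:balanced} and the sphere identity $\sum(x^i)^2=1$ finish the argument without further work.
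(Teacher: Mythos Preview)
Your proposal is correct and follows essentially the same route as the paper: both construct the variation by normalizing $u+tv$ in $\mV_{2k}^{g_0}$, extract the Euler--Lagrange equation and the second-variation spectral inequality (your display is the paper's inequality~\eqref{eqn:stability} multiplied by $j$), and then substitute $v=x^iu$ via~\eqref{eqn:balanced} and use $\sum_i(x^i)^2=1$. The only cosmetic difference is that you make explicit the computation of $\int u^{p-1}\ddot u_0$ from the constraint, which the paper leaves implicit in the phrase ``expanding this using~\eqref{eqn:second_variation}.''
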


\begin{proof}
 Let $v\in C^\infty(S^n)$ be such that
 \begin{equation}
  \label{eqn:tangent}
  \int_{S^n} vu^{\frac{nj+2k}{n-2k}}\,\dvol_{g_0} = 0 .
 \end{equation}
 Then $u_t:=\omega_n^{\frac{n-2k}{n(j+1)}}\lV u+tv\rV_{\frac{n(j+1)}{n-2k}}^{-1}(u+tv)$ defines a smooth curve in $\mV_{2k}^{g_0}$ with $u_0=u$ and $\left.\frac{\partial}{\partial t}\right|_{t=0}u_t=v$.  Since $u$ is a critical point of $\mE_L\colon\mV_{2k}^{g_0}\to\bR$, it follows from~\eqref{eqn:first_variation} that
 \[ L^{g_0}(u,\dotsc,u) = \omega_n^{-1}\mE_L^{g_0}(u)u^{\frac{nj+2k}{n-2k}} . \]
 Since $u$ is a local minimizer, $\frac{d^2}{dt^2}\bigr|_{t=0}\mE_L^{g_0}(u_t)\geq0$.  Expanding this using~\eqref{eqn:second_variation} and the above display yields
 \begin{equation}
  \label{eqn:stability}
  \int_{S^n} v\,L^{g_0}(v,u,\dotsc,u)\,\dvol_{g_0} \geq \frac{nj+2k}{j(n-2k)}\frac{\mE_L^{g_0}(u)}{\omega_n}\int_{S^n} v^2u^{\frac{n(j-1)+4k}{n-2k}}\,\dvol_{g_0} .
 \end{equation}
 
 The assumption~\eqref{eqn:balanced} implies that for each integer $0\leq i\leq n$, the function $v=x^iu$ satisfies~\eqref{eqn:tangent}.  It follows from~\eqref{eqn:stability} that
 \[ \sum_{i=0}^n \int_{S^n} x^iu\,L^{g_0}(x^iu,u,\dotsc,u)\,\dvol_{g_0} \geq \frac{nj+2k}{j(n-2k)}\mE_L^{g_0}(u) . \]
 The final conclusion follows from the definition of the commutator $[L,x^i]$.
\end{proof}

The balancing assumption~\eqref{eqn:balanced} is used only to obtain the simple form~\eqref{eqn:stability_commutator}.  If $L$ is conformally covariant, one can always assume that a local minimizer is balanced:

\begin{lem}
 \label{lem:balanced}
 Let $L$ be a formally self-adjoint conformally covariant $j$-differential operator of total order $2k$ on $(S^n,g_0)$, $n>2k$, and suppose that $u\in C^\infty(S^n)$ is a positive critical point of $\mE^{g_0}\colon\mV_{2k}^{g_0}\to\bR$.  Then there is an element $\Phi$ of the conformal group $\Conf(S^n)$ of $S^n$ such that
 \[ u_\Phi := \lv J_\Phi\rv^{\frac{n-2k}{n(j+1)}}\Phi^\ast u \]
 is a critical point of $\mE^{g_0}\colon\mV_{2k}^{g_0}\to\bR$ which satisfies~\eqref{eqn:balanced}, where $\lv J_\Phi\rv$ is the determinant of the Jacobian of $\Phi$.
\end{lem}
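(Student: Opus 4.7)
The plan is a Hersch-type center-of-mass argument using the noncompact part of $\Conf(S^n)$ parameterized by the open unit ball $B^{n+1}$. For each $\xi\in B^{n+1}$, let $\Phi_\xi\in\Conf(S^n)$ be the M\"obius transformation with $\Phi_0=\id$ whose conformal factor $\lv J_{\Phi_\xi}\rv^{\frac{n-2k}{n(j+1)}}$ realizes the bubble profile $(1-\lv\xi\rv^2)^{\frac{n-2k}{4}}(1+\xi\cdot\zeta)^{-\frac{n-2k}{2}}$ from~\eqref{eqn:standard_bubble}; one may think of $\Phi_\xi$ as the restriction to $S^n$ of the hyperbolic isometry of $B^{n+1}$ sending $0$ to $\xi$. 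Because $L$ is conformally covariant and formally self-adjoint, the transformation $u\mapsto u_\Phi$ preserves both the constraint $\mV_{2k}^{g_0}$ and the functional $\mE_L^{g_0}$, so each $u_{\Phi_\xi}$ is again a positive critical point of $\mE_L^{g_0}\colon\mV_{2k}^{g_0}\to\bR$.

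First I would define the vector-valued map $F\colon B^{n+1}\to\bR^{n+1}$ by
\[ F(\xi) := \frac{1}{\omega_n}\int_{S^n} \zeta\,u_{\Phi_\xi}(\zeta)^{\frac{n(j+1)}{n-2k}}\dvol_{g_0}(\zeta) . \]
The identity $u_\Phi^{\frac{n(j+1)}{n-2k}}\dvol_{g_0}=\lv J_\Phi\rv(\Phi^\ast u)^{\frac{n(j+1)}{n-2k}}\dvol_{g_0}$ combined with the change of variables $y=\Phi_\xi(\zeta)$ gives
\[ F(\xi) = \frac{1}{\omega_n}\int_{S^n} \Phi_\xi^{-1}(y)\,u(y)^{\frac{n(j+1)}{n-2k}}\dvol_{g_0}(y) . \]
In particular $\lv F(\xi)\rv\leq 1$ for every $\xi\in B^{n+1}$, and $F(\xi)=0$ is exactly the balancing condition~\eqref{eqn:balanced} applied to $u_{\Phi_\xi}$.

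Next I would verify that $F$ extends continuously to $\overline{B^{n+1}}$ with $F(\eta)=-\eta$. An explicit inspection of the M\"obius formula shows that as $\xi\to\eta\in S^n$, the isometry $\Phi_\xi^{-1}$ satisfies $\Phi_\xi^{-1}(y)\to -\eta$ for every $y\neq\eta$, uniformly on compact subsets of $S^n\setminus\{\eta\}$. Since the integrand is dominated by $u^{\frac{n(j+1)}{n-2k}}\in L^1(S^n,\dvol_{g_0})$, dominated convergence yields $F(\xi)\to -\eta$. The boundary restriction of the extension is therefore the antipodal map of $S^n$, which has nonzero degree $(-1)^{n+1}$.

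Finally, a standard topological argument closes the proof: if $F$ had no zero in $B^{n+1}$, then $F/\lv F\rv\colon\overline{B^{n+1}}\to S^n$ would extend a self-map of $S^n$ of nonzero degree to the contractible ball, which is impossible. Hence there exists $\xi^\ast\in B^{n+1}$ with $F(\xi^\ast)=0$, and $\Phi:=\Phi_{\xi^\ast}$ is the desired conformal transformation. The main obstacle is the boundary analysis in the third step: rigorously establishing the pointwise limit $\Phi_\xi^{-1}(y)\to -\eta$ together with its locally uniform character requires working directly with the M\"obius parameterization, and is the only piece of geometric input beyond conformal covariance.
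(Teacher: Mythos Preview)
Your argument is correct and is exactly the Hersch-type center-of-mass/degree argument behind Frank--Lieb's Lemma~B.1, which the paper simply cites after observing (as you do) that conformal covariance preserves the critical-point property in $\mV_{2k}^{g_0}$. One cosmetic remark: the explicit bubble exponent you wrote down is special to $j=1$; for general $j$ the power of $(1+\xi\cdot\zeta)$ is $-\frac{n-2k}{j+1}$, but since your proof never actually uses that formula---only the change-of-variables identity $u_\Phi^{\frac{n(j+1)}{n-2k}}\dvol_{g_0}=\Phi^\ast\bigl(u^{\frac{n(j+1)}{n-2k}}\dvol_{g_0}\bigr)$---the argument is unaffected.
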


\begin{proof}
 Since $\mE$ and $\mV_{2k}$ are conformally covariant, $u$ is a critical point of $\mE^{g_0}\colon\mV_{2k}^{g_0}\to\bR$ if and only if $u_\Phi$ is a critical point of $\mE^{g_0}\colon\mV_{2k}^{g_0}\to\bR$ for each $\Phi\in\Conf(S^n)$.  Since $u$ is positive, $\int u^{\frac{n(j+1)}{n-2k}}\not=0$.  It follows from~\cite[Lemma~B.1]{FrankLieb2012b} that there is a $\Phi\in\Conf(S^n)$ such that $u_\Phi$ satisfies~\eqref{eqn:balanced}.
\end{proof}

Proposition~\ref{prop:stable} and Lemma~\ref{lem:balanced} reduce the problem of classifying positive local minimizers of $\mE\colon\mV_{2k}\to\bR$ to the problem of showing that the only functions which satisfy~\eqref{eqn:stability_commutator} are the constants.  This can be done for the operators $L_{2k}$ and $L_{\sigma_2}$ using the results of Section~\ref{sec:commutator}.

\begin{proof}[Proof of Theorem~\ref{thm:classification}]
 All computations in this proof are carried out with respect to the round $n$-sphere of constant sectional curvature one.  Let $u$ be a positive local minimizer of
 \[ \mE_{2k}(v) := \int_{S^n} v\,L_{2k}v\,\dvol \]
 in $\mV_{2k}$.  By Lemma~\ref{lem:balanced}, we may assume that $u$ satisfies~\eqref{eqn:balanced}.  We conclude from Proposition~\ref{prop:stable} that
 \[ \sum_{i=0}^n \int_{S^n} x^iu\,[L_{2k},x^i](u)\,\dvol \geq \frac{4k}{n-2k}\mE(u) . \]
 Combining this with Theorem~\ref{thm:gjms_commutator} yields
 \begin{equation}
  \label{eqn:gjms_stable}
  0 \geq \frac{4k}{n-2k}\int_{S^n} u\left(L_{2k} - \frac{(n-2k)(n+2k-2)}{4}L_{2k-2}\right)u\,\dvol .
 \end{equation}
 It follows from the factorization~\eqref{eqn:gjms} that
 \[ L_{2k} - \frac{(n-2k)(n+2k-2)}{4}L_{2k-2} = -\Delta L_{2k-2} \]
 is a nonnegative operator with kernel exactly equal to the constant functions.  Combining this with~\eqref{eqn:gjms_stable} yields $u=1$.  The final conclusion follows from the fact that if $\Phi\in\Conf(S^n)$, then
 \begin{equation}
  \label{eqn:jacobian}
  \lv J_\Phi\rv(\zeta) = \left(\frac{1+\xi\cdot\zeta}{(1-\lv\xi\rv^2)^{1/2}}\right)^{-n}
 \end{equation}
 for some $\xi\in B^{n+1}$, the unit ball in $\bR^{n+1}$.
\end{proof}

\begin{proof}[Proof of Theorem~\ref{thm:sigma2_classification}]
 All computations in this proof are carried out with respect to the round $n$-sphere of constant sectional curvature one.  Note that $\mV_{1,4}$ is an open subset of $\mV_4$.  Let $u$ be a positive local minimizer of $\mE_{\sigma_2}\colon\mV_{1,4}\to\bR$.  By Lemma~\ref{lem:balanced}, we may assume that $u$ satisfies~\eqref{eqn:balanced}.  Proposition~\ref{prop:stable} implies that
 \[ \sum_{i=0}^n \int_{S^n} x^iu\,[L_{\sigma_2},x^i](u,u,u)\,\dvol \geq \frac{16}{3(n-4)}\mE_{\sigma_2}(u) . \]
 Combining this with Theorem~\ref{thm:sigma2_commutator} yields
 \[ 0 \geq \int_{S^n} \left[ \frac{16}{3(n-4)}\left(\sigma_1(u)+\frac{1}{2}\lv\nabla u\rv^2\right)\lv\nabla u\rv^2 + \frac{n+2}{6}u^2\lv\nabla u\rv^2\right]\,\dvol . \]
 Since $\sigma_1(u)>0$, we conclude that $u=1$.  The final conclusion follows from the fact that if $\Phi\in\Conf(S^n)$, then~\eqref{eqn:jacobian} holds for some $\xi\in B^{n+1}$.
\end{proof}


\bibliographystyle{abbrv}
\bibliography{../bib}

\newcommand{\noopsort}[1]{}
\begin{thebibliography}{10}

\bibitem{Bailey1994}
T.~N. Bailey, M.~G. Eastwood, and A.~R. Gover.
\newblock Thomas's structure bundle for conformal, projective and related
  structures.
\newblock {\em Rocky Mountain J. Math.}, 24(4):1191--1217, 1994.

\bibitem{Beckner1993}
W.~Beckner.
\newblock Sharp {S}obolev inequalities on the sphere and the
  {M}oser-{T}rudinger inequality.
\newblock {\em Ann. of Math. (2)}, 138(1):213--242, 1993.

\bibitem{CapGover2003}
A.~{\v{C}}ap and A.~R. Gover.
\newblock Standard tractors and the conformal ambient metric construction.
\newblock {\em Ann. Global Anal. Geom.}, 24(3):231--259, 2003.

\bibitem{CaseLinYuan2018b}
J.~S. Case, Y.-J. Lin, and W.~Yuan.
\newblock Some constructions of conformally covariant polydifferential
  operators.
\newblock \noopsort{2500}In preparation.

\bibitem{CaseWang2019}
J.~S. Case and Y.~Wang.
\newblock On a fully nonlinear sharp {S}obolev trace inequality.
\newblock \noopsort{2501}Preprint.

\bibitem{ChangGurskyYang2003b}
S.-Y.~A. Chang, M.~J. Gursky, and P.~C. Yang.
\newblock Entire solutions of a fully nonlinear equation.
\newblock In {\em Lectures on partial differential equations}, volume~2 of {\em
  New Stud. Adv. Math.}, pages 43--60. Int. Press, Somerville, MA, 2003.

\bibitem{FeffermanGraham2012}
C.~Fefferman and C.~R. Graham.
\newblock {\em The ambient metric}, volume 178 of {\em Annals of Mathematics
  Studies}.
\newblock Princeton University Press, Princeton, NJ, 2012.

\bibitem{FrankLieb2012b}
R.~L. Frank and E.~H. Lieb.
\newblock A new, rearrangement-free proof of the sharp
  {H}ardy-{L}ittlewood-{S}obolev inequality.
\newblock In {\em Spectral Theory, Function Spaces and Inequalities}, volume
  219 of {\em Oper. Theory Adv. Appl.}, pages 55--67. Birkh\"auser Verlag,
  Basel, 2012.

\bibitem{FrankLieb2012a}
R.~L. Frank and E.~H. Lieb.
\newblock Sharp constants in several inequalities on the {H}eisenberg group.
\newblock {\em Ann. of Math. (2)}, 176(1):349--381, 2012.

\bibitem{GeWang2013}
Y.~Ge and G.~Wang.
\newblock On a conformal quotient equation. {II}.
\newblock {\em Comm. Anal. Geom.}, 21(1):1--38, 2013.

\bibitem{GoverGraham2005}
A.~R. Gover and C.~R. Graham.
\newblock C{R} invariant powers of the sub-{L}aplacian.
\newblock {\em J. Reine Angew. Math.}, 583:1--27, 2005.

\bibitem{Graham1984}
C.~R. Graham.
\newblock Compatibility operators for degenerate elliptic equations on the ball
  and {H}eisenberg group.
\newblock {\em Math. Z.}, 187(3):289--304, 1984.

\bibitem{GJMS1992}
C.~R. Graham, R.~Jenne, L.~J. Mason, and G.~A.~J. Sparling.
\newblock Conformally invariant powers of the {L}aplacian. {I}. {E}xistence.
\newblock {\em J. London Math. Soc. (2)}, 46(3):557--565, 1992.

\bibitem{GuanWang2003b}
P.~Guan and G.~Wang.
\newblock A fully nonlinear conformal flow on locally conformally flat
  manifolds.
\newblock {\em J. Reine Angew. Math.}, 557:219--238, 2003.

\bibitem{GuanWang2004}
P.~Guan and G.~Wang.
\newblock Geometric inequalities on locally conformally flat manifolds.
\newblock {\em Duke Math. J.}, 124(1):177--212, 2004.

\bibitem{JerisonLee1988}
D.~Jerison and J.~M. Lee.
\newblock Extremals for the {S}obolev inequality on the {H}eisenberg group and
  the {CR} {Y}amabe problem.
\newblock {\em J. Amer. Math. Soc.}, 1(1):1--13, 1988.

\bibitem{LiLi2003}
A.~Li and Y.~Li.
\newblock On some conformally invariant fully nonlinear equations.
\newblock {\em Comm. Pure Appl. Math.}, 56(10):1416--1464, 2003.

\bibitem{Lieb1983}
E.~H. Lieb.
\newblock Sharp constants in the {H}ardy-{L}ittlewood-{S}obolev and related
  inequalities.
\newblock {\em Ann. of Math. (2)}, 118(2):349--374, 1983.

\bibitem{Obata1971}
M.~Obata.
\newblock The conjectures on conformal transformations of {R}iemannian
  manifolds.
\newblock {\em J. Differential Geometry}, 6:247--258, 1971/72.

\bibitem{Viaclovsky2000}
J.~A. Viaclovsky.
\newblock Conformal geometry, contact geometry, and the calculus of variations.
\newblock {\em Duke Math. J.}, 101(2):283--316, 2000.

\bibitem{Wang2013}
X.~Wang.
\newblock On a remarkable formula of {J}erison and {L}ee in {CR} geometry.
\newblock {\em Math. Res. Lett.}, 22(1):279--299, 2015.

\end{thebibliography}
\end{document}